\documentclass{amsart}[10pt]
\usepackage{amsfonts,amssymb,amsmath,amsthm}
\usepackage{url}
\usepackage{enumerate}
\usepackage{multicol,pifont}
\vfuzz2pt 
\hfuzz2pt 
\newtheorem{thm}{Theorem}[section]
\newtheorem{cor}[thm]{Corollary}
\newtheorem{lem}[thm]{Lemma}
\newtheorem{prop}[thm]{Proposition}
\theoremstyle{definition}

\theoremstyle{remark}
\newtheorem{rem}[thm]{\bf Remark}

\numberwithin{equation}{section}

\setlength {\columnsep }{6 mm }
 \setlength {\columnseprule }{0 pt}
\setlength{\paperwidth}{21cm} \setlength{\paperheight}{24cm}
\setlength{\evensidemargin}{0cm} \setlength{\oddsidemargin}{0cm}
\setlength{\topmargin}{-2.5cm} \setlength{\headsep}{0.5cm}
\setlength{\headheight}{2cm} \setlength{\textheight}{23cm}
\setlength{\textwidth}{16cm}

\begin{document}
\title[ $p$-chaoticity and regular action of abelian $C^{1}$-diffeomorphisms  groups of $\mathbb{C}^{n}$ fixing a point]{ $p$-chaoticity and regular action
\\ of abelian $C^{1}$-diffeomorphisms  groups of $\mathbb{C}^{n}$ fixing a point}

\author{Yahya N'dao and Adlene Ayadi}

 \address{Yahya N'dao, University of Moncton, Department of mathematics and statistics, Canada}
 \email{yahiandao@yahoo.fr }
\address{Adlene Ayadi, University of Gafsa, Faculty of sciences, Department of Mathematics,Gafsa, Tunisia.}

\email{adlenesoo@yahoo.com}

\thanks{This work is supported by the research unit: syst\`emes dynamiques et combinatoire:
99UR15-15} \subjclass[2000]{37B20, 54H20,37C85,58F08,28D05}
\keywords{Diffeomorphisms, chaotic, regular, abelian, group,
orbit, reguliar, action}

\begin{abstract}

In this paper, we introduce the notion of regular action of any
subgroup $G$ of $Diff^{1}(\mathbb{C}^{n})$ on $\mathbb{C}^{n}$
(i.e. the closure of every orbit of $G$ in some open set is a
topological sub-manifold of $\mathbb{C}^{n}$).  We prove that the
action of $G$, can not be $p$-chaotic for every $0\leq p \leq
n-1$. $($i.e. If $G$ has a dense orbit then the set of all regular
orbit with order $p$ can not be dense in $\mathbb{C}^{n})$.
Moreover, we prove that the action of any abelian lie subgroup of
$Diff^{1}(\mathbb{C}^{n})$, \ , is regular.
\end{abstract}
\maketitle

\begin{multicols}{2}

\section{\bf Introduction }

Denote by $Diff^{1}(\mathbb{C}^{n})$ the group of all
$C^{1}$-diffemorphisms of $\mathbb{C}^{n}$.  Let $G$ be an abelian
subgroup of $Diff^{1}(\mathbb{C}^{n})$  such that $0\in Fix(G)$
and $dim(vect(L_{G}))=n$, where $vect(L_{G})$ is the vector space
generated by $L_{G}=\{Df(0),\ f\in G\}$ and $Fix(G)=\{x\in
\mathbb{C}^{n}:\ f(x)=x,\ \forall f\in G\}$ be the global fixed
point set of $G$.  We can assume that $0\in Fix(G)$, leaving to
replace $G$ by $T_{a}\circ G \circ T_{-a}$ for any translation
$T_{a}$ of any vector $a\in Fix(G)$. There is a natural action
$G\times \mathbb{C}^{n} \longrightarrow \mathbb{C}^{n}$. $(f, x)
\longmapsto f(x)$. For a point $x\in\mathbb{C}^{n}$, denote by
$G(x) =\{f(x), \ f\in G\}\subset \mathbb{C}^{n}$ the orbit of $G$
through $x$. Denote by $\overline{E}$ (resp. $\overset{\circ}{E}$
) the closure (resp. interior) of $E$. A topological space $X$ is
called a topological manifold with dimension $r\geq 0$ over
$\mathbb{C}$ if every point has a neighborhood homeomorphic to
$\mathbb{C}^{r}$. This means that the image of any topological
manifold by a homeomorphism is a topological manifold with the
same dimension. An orbit $\gamma$ is called \emph{regular} with
order $ord(\gamma)=m$ if for every $y\in \gamma$ there exists an
neighborhood $O$ of $y$ such that $\overline{\gamma}\cap O$ is a
topological sub-manifold of $\mathbb{C}^{n}$ with dimension $m$
over $\mathbb{C}$. In particular, $\gamma$ is locally dense in
$\mathbb{C}^{n}$ if and only if $m = n$, and it is discrete if and
only if $m = 0$. Notice that, the closure of a regular orbit is
not necessary a manifold. We say that the action of $G$ is
\emph{regular} on $\mathbb{C}^{n}$ if every orbit of $G$ is
regular.  The action of $G$ is called \emph{chaotic} if $G$ has a
dense orbit and the union of all periodic orbits is dense in
$\mathbb{C}^{n}$ (cf. \cite{RLD}, \cite{SS}, \cite{GCAKAN}). We
give a generalization of the chaos as follow: The action of $G$ is
called \emph{$p$-chaotic},  $0\leq p \leq n-1$, if $G$ has a dense
orbit and the union of all orbits with order $p$ is dense in
$\mathbb{C}^{n}$. See that every chaotic action is $0$-chaotic.
Here, the question to investigate is the following:\
\medskip

 {\it The natural action of any subgroup of $Diff(\mathbb{C}^{n})$ can be $p$-chaotic, $0\leq p \leq n-1$?}
\
\\

{\it The action of any abelian lie subgroup of
$Diff(\mathbb{C}^{n})$ can be regular?}
\medskip

The notion of regular orbit is a generalization of non exceptional
orbit defined for the action of any group of diffeomorphisms on
$\mathbb{C}^{n}$. A nonempty  subset $E \subset \mathbb{C}^{n}$ is
a minimal set if for every $y\in E$ the orbit of $y$ is dense in
$E$. An orbit with its closure is a \emph{Cantor} set is called an
exceptional orbit. Their dynamics were recently initiated for some
classes in different point of view, (see for instance,
[3],[4],[5],[6],[7],[9]).
\smallskip

The action of $G$ on $\mathbb{C}^{n}$ is said \emph{proper} if and
only if the pre-image of any compact set by the action map, is
compact (i.e. for every two compact subsets $K_{1}$ and $K_{2}$ of
$\mathbb{C}^{n}$, the subset $\{f\in G,\ \ f(K_{1})\cap
K_{2}\neq\emptyset\}$ of G is compact). It is well known, that if
the action of a lie group on $\mathbb{C}^{n}$ is proper then all
the orbits are  embedded submanifolds in $\mathbb{C}^{n}$ (see for
instance ~\cite{JJ} and ~\cite{VVY}). Remark that, a proper action
of any lie group is regular, this means that the regular action is
a generalization of the proper action. \ \\  In \cite{ACNPS}, A.C.
Naolekar and P. Sankaran construct chaotic actions of certain
finitely generated abelian groups on even-dimensional spheres, and
of finite index subgroups of $SL(n.\mathbb{Z})$ on tori. They also
study chaotic group actions via compactly supported homeomorphisms
on open manifolds.
\smallskip

In \cite{PWCV}, P.W.Michor and C.Vizman proved that some groups of
diffeomorphisms of a manifold $M$ act n-transitively for each
finite n (i.e. for any two ordred sets of $n$ different points
$(x_{1},\dots, x_{n})$ and $(y_{1},\dots, y_{n})$ in $M$ there is
a smooth diffeomorphism $f$ in the group such that
$f(x_{i})=y_{i}$ for each $i$) .
\smallskip

 In \cite{YaHaA}, the authors studied the minimality  of any abelian diffeomorphisms groups acting
 on $\mathbb{C}^{n}$ fixing a point and $dim(L_{G})=n$, whose generalize the structure's theorem given in
\cite{aAhM05} for abelian linear group. This paper can be viewed
as a continuation of these works.
\
\\

Our principal results can be stated as follows:
\smallskip

\begin{thm}\label{T:01} Let $G$ be an abelian subgroup of $Diff^{1}(\mathbb{C}^{n})$ such that $0\in Fix(G)$
and $dim(vect(L_{G}))=n$. If $G$ has a dense orbit then the set of all dense orbit is a
$G$-invariant open set, dense in $\mathbb{C}^{n}$.
\end{thm}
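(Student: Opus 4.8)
The plan is to write $D=\{x\in\mathbb{C}^{n}:\ \overline{G(x)}=\mathbb{C}^{n}\}$ for the set of points with dense orbit, and to establish the three required properties --- $G$-invariance, density, and openness --- in that order, with openness being the only nontrivial one.

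First, $G$-invariance is purely group-theoretic: for $x\in D$ and $f\in G$ one has $G(f(x))=\{(g\circ f)(x):\ g\in G\}=\{h(x):\ h\in G\}=G(x)$, since $f$ is invertible in $G$ and hence $g\mapsto g\circ f$ permutes $G$. Taking closures gives $\overline{G(f(x))}=\overline{G(x)}=\mathbb{C}^{n}$, so $f(x)\in D$. This immediately yields density as well: if $x_{0}\in D$ (such $x_{0}$ exists by hypothesis), then every point of its orbit lies in $D$, that is $G(x_{0})\subseteq D$; since $G(x_{0})$ is dense in $\mathbb{C}^{n}$, so is the larger set $D$. Thus the whole theorem reduces to showing that $D$ is open.

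For openness I would fix $x_{0}\in D$ and seek a neighborhood $V$ of $x_{0}$ with $V\subseteq D$; equivalently, I must show that the complement $\mathbb{C}^{n}\setminus D$ of non-dense points is closed. The mechanism is that, under the standing hypotheses ($G$ abelian, $0\in Fix(G)$, $dim(vect(L_{G}))=n$), the action admits a normal form. Concretely, I would invoke the structure theorem for abelian diffeomorphism groups of \cite{YaHaA} (resting on the linear classification of \cite{aAhM05}): there is an open dense $G$-invariant set on which $G$ is conjugate, by a $C^{1}$-diffeomorphism $\phi$, to a model abelian group $\widetilde{G}$ of triangular/linear type. Since $\phi$ is a homeomorphism intertwining the two actions, it carries orbits to orbits and orbit-closures to orbit-closures, hence sends dense-orbit points to dense-orbit points and preserves both openness and density. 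It therefore suffices to prove openness of the dense-orbit set for the explicit model $\widetilde{G}$, where the orbit closures are computed directly from the spectral/Jordan data and the set of non-dense points is visibly a finite union of proper $\widetilde{G}$-invariant closed submanifolds --- the analogue of the coordinate hyperplanes and the fixed point in the spiral example $z\mapsto\lambda z$, for which $D=\mathbb{C}\setminus\{0\}$.

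The main obstacle is precisely this openness (equivalently closedness) step: I must verify that the normal form of \cite{YaHaA} applies under the present hypotheses, track how it interacts with points lying outside the good set where the conjugacy $\phi$ is defined, and confirm that in the model the obstruction to having a dense orbit is membership in a closed proper invariant set rather than a merely $G_{\delta}$ condition. I expect to handle the points outside the good set by using $G$-invariance to reduce to the good set, and to identify $\mathbb{C}^{n}\setminus D$ in the model as a closed set cut out by the vanishing of the coordinates associated to the non-dense eigen-directions. Once $\mathbb{C}^{n}\setminus D$ is shown to be closed, $D$ is open, which together with the invariance and density already obtained completes the proof.
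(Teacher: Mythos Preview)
Your treatment of $G$-invariance and density is correct and matches what the paper does implicitly. The divergence is entirely in the openness step, and there your plan is both more complicated than necessary and not actually carried out.

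You propose to pass through a normal form: conjugate $G$ on some open dense invariant set to a model group $\widetilde{G}$ and then read off that the non-dense locus is closed in the model. But you yourself flag the real obstacles --- the conjugacy from \cite{YaHaA} may only be defined on a proper open set, you have not verified what the model looks like or that its non-dense locus is closed, and you have no argument for points lying outside the domain of $\phi$. As written this is a strategy with acknowledged holes, not a proof.

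The paper avoids all of this by identifying $D$ with an explicit open set. For $y\in\mathbb{C}^{n}$ let $E(y)=\{f(y):f\in\mathcal{A}(G)\}$, the linear span of the orbit under the algebra generated by $G$, and set $U_{n}=\{y:\dim E(y)=n\}$. Openness of $U_{n}$ is elementary: if $y\in U_{n}$, choose $f_{1},\dots,f_{n}\in\mathcal{A}(G)$ with $f_{1}(y),\dots,f_{n}(y)$ linearly independent, and note that the Gram determinant $\Delta(z)=\det\big(\langle f_{i}(z),f_{j}(z)\rangle\big)$ is continuous in $z$ and nonzero at $y$, so $\{z:\Delta(z)\neq 0\}$ is an open neighbourhood of $y$ inside $U_{n}$. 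The input from \cite{YaHaA} is not a conjugacy but the single statement (quoted as Lemma~\ref{C1212:4}) that under the standing hypotheses every orbit through a point of $U_{n}$ is dense. This gives $U_{n}\subset D$; the reverse inclusion is immediate since a dense orbit certainly spans $\mathbb{C}^{n}$. Hence $D=U_{n}$ is open.

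So the gap in your proposal is that you reach for a global structural classification where a pointwise linear-independence/continuity argument already does the job; the paper's route is shorter and sidesteps every difficulty you list.
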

\smallskip

\begin{cor}\label{C:01} The natural action of any abelian subgroup of $Diff^{1}(\mathbb{C}^{n})$
such that $0\in Fix(G)$ and $dim(vect(L_{G}))=n$, can not be
$p$-chaotic for every $0\leq p\leq n-1$. In particular, it can not
be chaotic.
\end{cor}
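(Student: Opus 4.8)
The plan is to deduce the Corollary directly from Theorem~\ref{T:01}, combined with the elementary observation that a dense orbit is itself regular of order $n$. First I would dispose of the trivial case: if $G$ has no dense orbit, then by the very definition of $p$-chaoticity the action fails to be $p$-chaotic for every $p$, and there is nothing to prove. So I may assume throughout that $G$ possesses a dense orbit, and the task reduces to showing that for each $0\leq p\leq n-1$ the union of all orbits of order $p$ is \emph{not} dense in $\mathbb{C}^{n}$.

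The key step is to record that a dense orbit has order exactly $n$. Indeed, if $\gamma$ is dense then $\overline{\gamma}=\mathbb{C}^{n}$, so for every $y\in\gamma$ and every neighborhood $O$ of $y$ one has $\overline{\gamma}\cap O=O$, which is an open subset of $\mathbb{C}^{n}$ and hence a topological submanifold of dimension $n$; equivalently $\gamma$ is locally dense, so its order is $m=n$ by the characterization given in the Introduction. Since the order is an invariant attached to the orbit itself, no dense orbit can simultaneously be an orbit of order $p<n$. Writing $U$ for the set of points whose orbit is dense and $V_{p}$ for the union of all orbits of order $p$, this gives the disjointness $V_{p}\cap U=\emptyset$, that is, $V_{p}\subseteq \mathbb{C}^{n}\setminus U$.

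Now Theorem~\ref{T:01} supplies exactly what is needed: because $G$ has a dense orbit, $U$ is an open, dense, $G$-invariant subset of $\mathbb{C}^{n}$. Hence $\mathbb{C}^{n}\setminus U$ is closed with empty interior, i.e. closed and nowhere dense. From $V_{p}\subseteq \mathbb{C}^{n}\setminus U$ and the closedness of the latter I conclude $\overline{V_{p}}\subseteq \mathbb{C}^{n}\setminus U$, so $\overline{V_{p}}$ has empty interior and $V_{p}$ is not dense in $\mathbb{C}^{n}$ for any $0\leq p\leq n-1$. Therefore the action is not $p$-chaotic. Finally, since a chaotic action is by definition $0$-chaotic, the impossibility in the case $p=0$ yields that the action cannot be chaotic either.

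The step I expect to demand the most care is the second one: I must make sure that the order is well defined on each orbit (the same dimension $m$ works at every point $y\in\gamma$, as the definition requires) and that a dense orbit genuinely satisfies the regularity definition with $m=n$, so that the disjointness $V_{p}\cap U=\emptyset$ is rigorous rather than merely intuitive. Once this is pinned down, the conclusion is a one-line Baire-type topological argument applied to the complement of the open dense set provided by Theorem~\ref{T:01}.
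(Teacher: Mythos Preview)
Your proposal is correct and follows essentially the same route as the paper: assume a dense orbit exists, invoke Theorem~\ref{T:01} to get that the set of dense orbits is open and dense, observe that dense orbits have order $n$ and hence are disjoint from the union $V_p$ of order-$p$ orbits for $p\leq n-1$, and conclude that $V_p$ cannot be dense. The paper's proof is merely a terser version of yours, asserting the disjointness $\mathcal{L}\cap\mathcal{P}=\emptyset$ without spelling out that dense orbits are regular of order $n$; your added justification of this point is welcome and fills in what the paper leaves implicit.
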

\smallskip

\begin{thm}\label{T:2} The natural action of  any abelian lie subgroup of $Diff^{1}(\mathbb{C}^{n})$
 on $\mathbb{C}^{n}$ is regular.
\end{thm}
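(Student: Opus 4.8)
The plan is to fix a single orbit $\gamma=G(x)$ and a point $y\in\gamma$, and to show that $\overline{\gamma}$ is, in a neighborhood of $y$, a topological submanifold of $\mathbb{C}^n$ whose dimension does not depend on the chosen point of $\gamma$. First I would record the reductions that the abelian and Lie hypotheses allow. Since $G$ is abelian, every point of $\gamma$ has the same stabilizer $H:=G_x=\{g\in G:\ g(x)=x\}$, which is a closed subgroup; hence the orbit map descends to an injective immersion $G/H\hookrightarrow\mathbb{C}^n$ with image $\gamma$, and $G/H$ is again an abelian Lie group. Writing $G_0$ for the identity component and invoking the structure theorem for connected abelian Lie groups, one has $G_0/(H\cap G_0)\cong\mathbb{R}^a\times\mathbb{T}^b$ for suitable $a,b$; the full orbit is the union, indexed by the discrete component group $G/G_0$, of the diffeomorphic translates of the $G_0$-orbit $\gamma_0=G_0(y)$. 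It therefore suffices to analyse $\overline{\gamma_0}$ near $y$ and to reassemble at the end.

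Next I would parametrize $\gamma_0$ by commuting flows. The Lie algebra $\mathfrak{g}$ of $G$ is abelian, and its one-parameter subgroups act as pairwise commuting $C^1$ flows $\phi^1,\dots,\phi^d$ on $\mathbb{C}^n$, where $d=\dim G_0$. Because the flows commute, the map
\begin{equation}
\Phi(t_1,\dots,t_d)=\phi^1_{t_1}\circ\cdots\circ\phi^d_{t_d}(y)
\end{equation}
parametrizes $\gamma_0$, and its differential is carried along $\gamma_0$ by the action of $G_0$ through invertible differentials; consequently $\Phi$ has constant rank, equal to $m_0:=\dim\gamma_0$. By the constant-rank theorem (equivalently, by the existence of a local slice for the $G_0$-action) the orbit $\gamma_0$ is near $y$ an immersed submanifold of dimension $m_0$; in a suitable local chart it appears as an at most countable union of parallel $m_0$-dimensional plaques, and $\overline{\gamma_0}$ is the closure of this family of plaques.

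Then comes the heart: controlling the closure. The compact factor $\mathbb{T}^b$ contributes an embedded, already closed piece of the orbit, while the possible failure of $\gamma_0$ to be closed comes entirely from the $\mathbb{R}^a$ factor. The limit points added in passing to $\overline{\gamma_0}$ are the accumulation of a homomorphic image of $\mathbb{R}^a$, and the key claim is that, read in the slice coordinates at $y$, this accumulation fills out a topological submanifold of some dimension $m\ge m_0$ rather than a totally disconnected or fractal set. I would establish this by transporting the classical description of the closure of a one- or multi-parameter subgroup inside a torus (such a closure is a subtorus, hence a submanifold) through the $G_0$-invariance of $\overline{\gamma_0}$: since $G_0$ preserves $\overline{\gamma_0}$ and acts transitively on $\gamma_0$, any point of $\gamma_0$ is carried to $y$ by an element of $G_0$, which is a diffeomorphism of $\mathbb{C}^n$ mapping $\overline{\gamma_0}$ to itself, so the germ of $\overline{\gamma_0}$ is the same at every point of $\gamma_0$. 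Thus, once $\overline{\gamma_0}$ is a manifold of dimension $m$ at $y$, it is one of the same dimension $m$ at every point of $\gamma_0$. Reassembling the (at most countably many) diffeomorphic translates coming from $G/G_0$ — each an $m$-dimensional submanifold through the corresponding orbit point — then shows that $\overline{\gamma}$ is, near every $y\in\gamma$, a topological submanifold of constant dimension $m$, i.e. $\gamma$ is regular; as $\gamma$ was arbitrary, the action is regular.

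I expect the decisive obstacle to be precisely this closure step. In the merely $C^1$ category a group action can produce exceptional (Cantor) minimal sets, so the argument must use the Lie-group hypothesis in an essential way to exclude such pathologies and to guarantee that the accumulation of the $\mathbb{R}^a$-directions is homogeneous enough to be a manifold. Making rigorous the passage from ``the closure is a manifold at $y$'' to ``a topological submanifold chart at every nearby point of $\overline{\gamma_0}$'' — rather than merely at points of $\gamma_0$ itself — is where the real work lies, and where one must lean on the structure theorem from \cite{YaHaA} instead of on soft general-position arguments.
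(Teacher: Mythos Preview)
Your proposal has a genuine gap at precisely the point you yourself flag: the closure step. You correctly set up the immersed orbit $\gamma_0$ and observe that its local germ is constant along $\gamma_0$ by $G_0$-equivariance, but you never actually prove that $\overline{\gamma_0}$ is a topological manifold near any single point. The appeal to ``the classical description of the closure of a multi-parameter subgroup inside a torus'' does not transfer here: $\gamma_0$ sits in $\mathbb{C}^n$, not in a compact group, and the accumulation of the $\mathbb{R}^a$-directions is governed by the nonlinear dynamics of the commuting flows, not by any group structure on the ambient space. Your final deferral to a structure theorem from \cite{YaHaA} is a pointer, not an argument, and the paragraph about propagating the manifold structure from points of $\gamma_0$ to nearby points of $\overline{\gamma_0}$ is exactly the missing step, not a resolution of it.

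The paper's route is quite different and sidesteps this obstacle altogether. It does \emph{not} try to describe $\overline{G(x)}$ as a possibly higher-dimensional manifold containing the orbit; instead it argues that locally $\overline{G(x)}$ coincides with the orbit itself. Writing $F_x$ for a complement of the stabilizer subalgebra in $\mathrm g$ and $B(x)=\Phi_x\bigl(\exp(F_x)\cap V\bigr)$ for a small piece of $G_0(x)$, one uses Hofmann's Lie-wedge criterion (Lemma~\ref{L1001:001}, Corollary~\ref{C1001:001++}) to see that $\exp(F_x)$ is a closed Lie subgroup of $G_0$, and then Proposition~\ref{p:1} and Corollary~\ref{CC:0+03} to produce an open $W\subset\mathbb{C}^n$ with $G(x)\cap W=B(x)$. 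Since $B(x)$ is a topological submanifold, hence locally closed, one may arrange $\overline{B(x)}\cap W=B(x)$; combined with the elementary identity $\overline{G(x)}\cap W=\overline{G(x)\cap W}\cap W$ of Lemma~\ref{L:alal}, this yields $\overline{G(x)}\cap W=B(x)$, a manifold, and Lemma~\ref{La:00} propagates regularity to all of $G(x)$. In short, where you attempt to analyse what the closure adds, the paper's line is that, in a suitable neighbourhood, the closure adds nothing.
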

\smallskip

As a directly consequence of Theorems ~\ref{T:2} and ~\ref{T:01},
we prove the regularity action of any abelian linear group on
$\mathbb{C}^{n}$.
\smallskip

\begin{cor}\label{C:2} The natural action of any abelian subgroup of $GL(n, \mathbb{C})$  on $\mathbb{C}^{n}$ is regular and not chaotic.
\end{cor}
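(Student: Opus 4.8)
The plan is to deduce both assertions from the general theorems by exhibiting, for an arbitrary abelian $G\subset GL(n,\mathbb{C})$, the hypotheses needed to invoke them; the only genuine work is to pass from an abstract abelian subgroup to an abelian Lie subgroup on the regularity side, and to verify the dimension condition on the chaos side.

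\emph{Regularity.} Every $A\in GL(n,\mathbb{C})$ is a (linear, hence $C^{1}$) diffeomorphism of $\mathbb{C}^{n}$, so $G\subset Diff^{1}(\mathbb{C}^{n})$. The obstacle is that $G$ itself need not be a Lie subgroup, since it may fail to be an immersed submanifold (for instance a countable dense subgroup). I would therefore replace $G$ by its closure $\overline{G}$ in $GL(n,\mathbb{C})$: being a closed subgroup of a Lie group, $\overline{G}$ is, by Cartan's closed subgroup theorem, an embedded abelian Lie subgroup, to which Theorem~\ref{T:2} applies and yields that the action of $\overline{G}$ is regular. It then remains to transfer regularity back to $G$. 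The key observation is that $\overline{G(x)}=\overline{\overline{G}(x)}$ for every $x\in\mathbb{C}^{n}$: indeed $G(x)\subset\overline{G}(x)$, while each $g\in\overline{G}$ is a limit of elements of $G$, so $g(x)\in\overline{G(x)}$; hence $G(x)\subset\overline{G}(x)\subset\overline{G(x)}$ and all three closures coincide. Since the notion of a regular orbit involves only the closure of the orbit, for each $y\in G(x)\subset\overline{G}(x)$ the neighborhood $O$ furnished by the regularity of the $\overline{G}$-orbit satisfies $\overline{G(x)}\cap O=\overline{\overline{G}(x)}\cap O$, a sub-manifold of the same dimension; thus $G(x)$ is regular. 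As $x$ is arbitrary, the action of $G$ is regular.

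\emph{Non-chaoticity.} Here no Lie condition is needed, so I would work with $G$ directly and apply Corollary~\ref{C:01} (equivalently Theorem~\ref{T:01}). First, $0\in Fix(G)$ automatically, since every linear map fixes $0$. Next I must secure the dimension hypothesis precisely when chaos could occur. Assume, for contradiction, that $G$ is chaotic; then $G$ has a dense orbit $G(x_{0})$. Writing $\mathcal{A}=vect(L_{G})$ and noting that for a linear map $Df(0)$ is its own matrix, we have $L_{G}=G$, so $\mathcal{A}$ is the linear span of $G$ in $M_{n}(\mathbb{C})$; since $G$ is a commutative group of matrices, $\mathcal{A}$ is a commutative unital subalgebra of $M_{n}(\mathbb{C})$. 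Density of $G(x_{0})$ forces $\mathcal{A}x_{0}=\mathbb{C}^{n}$, because $\mathcal{A}x_{0}$ is a (closed) subspace containing the dense set $G(x_{0})$; that is, $x_{0}$ is a cyclic vector for $\mathcal{A}$. The linear map $\mathcal{A}\to\mathbb{C}^{n}$, $A\mapsto Ax_{0}$, is then surjective, and it is also injective: if $Ax_{0}=0$ then $A(\mathcal{A}x_{0})=\mathcal{A}(Ax_{0})=0$ by commutativity, whence $A$ vanishes on $\mathcal{A}x_{0}=\mathbb{C}^{n}$, so $A=0$. Therefore $\dim(vect(L_{G}))=\dim\mathcal{A}=n$, and Corollary~\ref{C:01} applies, giving that the action cannot be $0$-chaotic, hence cannot be chaotic — contradicting the assumption. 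Thus $G$ is not chaotic.

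The main obstacle, and the only step requiring an idea beyond bookkeeping, is this reduction to the hypotheses of the cited results: on the regularity side, replacing $G$ by $\overline{G}$ and checking that the passage preserves regularity through the equality of orbit closures; and on the chaos side, establishing $\dim(vect(L_{G}))=n$ via the cyclic-vector argument, where commutativity of $G$ is exactly what makes $A\mapsto Ax_{0}$ injective.
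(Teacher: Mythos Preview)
Your regularity argument is essentially the paper's: it too passes to the closure $\widetilde{L}=\overline{L}\cap GL(n,\mathbb{C})$ (which coincides with your $\overline{G}$ taken in $GL(n,\mathbb{C})$), applies Theorem~\ref{T:2} to this abelian Lie subgroup, and then transfers regularity back via the equality of orbit closures (the paper isolates this as Lemma~\ref{L:04}, proved by a diagonal-sequence argument, while you obtain it directly from continuity of evaluation). The only cosmetic difference is that you cite Cartan's closed-subgroup theorem explicitly, whereas the paper simply asserts that $\widetilde{L}$ is a Lie subgroup.

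For the ``not chaotic'' clause you actually do more than the paper: the written proof of Corollary~\ref{C:2} treats only regularity and leaves the non-chaoticity as an unspoken consequence of Theorem~\ref{T:01}/Corollary~\ref{C:01}. Your cyclic-vector argument --- that a dense orbit forces $\mathcal{A}x_{0}=\mathbb{C}^{n}$ and that commutativity makes $A\mapsto Ax_{0}$ injective, hence $\dim(\mathrm{vect}(L_{G}))=n$ --- is exactly what is needed to legitimately invoke Corollary~\ref{C:01}, and it fills a step the paper does not spell out. So your approach matches the paper on the main point and is more complete on the secondary one.
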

\smallskip

\section{{\bf Proof Theorem ~\ref{T:2} and corollary ~\ref{C:01} }} We will cite the
definition of the exponential map given in ~\cite{IL}.

 Denote by:\
\\
- $\mathrm{g}$ be the lie algebra associated to $G$.\\ - The
exponential map $exp: \mathrm{g}\longrightarrow G$ is defined in
above.\smallskip

\begin{lem}\label{La:00} Let $x\in \mathbb{C}^{n}$. Then $G(x)$ is regular with order $r\geq 0$ if and only if there exist
an open set $O_{x}$ containing $G(x)$ such that
$\overline{G(x)}\cap O_{x}$ is a manifold with dimension $r\geq
0$.
\end{lem}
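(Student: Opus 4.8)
The statement is a routine reformulation of the definition of a regular orbit, passing from a local (point-by-point) condition to a single open neighborhood of the \emph{whole} orbit, so the plan is a standard gluing argument in which only the forward direction requires work. First I would dispose of the converse: if some open set $O_{x}\supseteq G(x)$ satisfies that $\overline{G(x)}\cap O_{x}$ is a manifold of dimension $r$, then for every $y\in G(x)$ the set $O=O_{x}$ is itself a neighborhood of $y$ with $\overline{G(x)}\cap O$ a manifold of dimension $r$, which is exactly the defining condition for $G(x)$ to be regular with order $r$.

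For the forward direction, assume $G(x)$ is regular with order $r$. For each $y\in G(x)$ choose, as in the definition, an open neighborhood $O_{y}$ of $y$ such that $\overline{G(x)}\cap O_{y}$ is a topological submanifold of $\mathbb{C}^{n}$ of dimension $r$, and set $O_{x}=\bigcup_{y\in G(x)}O_{y}$. Then $O_{x}$ is open and contains $G(x)$, so it remains only to verify that $\overline{G(x)}\cap O_{x}$ is a manifold of dimension $r$. I would check the manifold property pointwise, using that being a topological manifold of a fixed dimension is a local condition. Given $z\in\overline{G(x)}\cap O_{x}$, there is some $y\in G(x)$ with $z\in O_{y}$; since $O_{y}$ is open and $\overline{G(x)}\cap O_{y}$ is a manifold of dimension $r$, the point $z$ admits a relatively open neighborhood $V\subseteq O_{y}$ with $\overline{G(x)}\cap V$ homeomorphic to an open subset of $\mathbb{C}^{r}$. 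Because $V\subseteq O_{y}\subseteq O_{x}$, we have $\overline{G(x)}\cap V=\bigl(\overline{G(x)}\cap O_{x}\bigr)\cap V$, so this chart is simultaneously a chart for $\overline{G(x)}\cap O_{x}$ around $z$. Thus every point of $\overline{G(x)}\cap O_{x}$ has a chart neighborhood modeled on $\mathbb{C}^{r}$.

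The one point to treat with care---and the main, though mild, obstacle---is that the dimension must not jump where the neighborhoods $O_{y}$ overlap: a single $z$ may belong to several of the $O_{y}$, and a priori the resulting local charts could carry different dimensions. This is ruled out by invariance of domain, which makes the local topological dimension of $\overline{G(x)}$ at $z$ an intrinsic invariant; since each $O_{y}$ containing $z$ forces this dimension to equal $r$, all the overlapping charts agree and glue to a well-defined $r$-dimensional manifold structure on $\overline{G(x)}\cap O_{x}$. This yields the desired open set $O_{x}$ and completes the proof.
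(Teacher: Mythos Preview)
Your argument is correct for the statement as literally written (with $O_{x}$ containing the whole orbit $G(x)$), but the paper in fact intends and proves a slightly different version in which $O_{x}$ is merely an open neighborhood of the single point $x$. With that reading the labor reverses: the forward implication becomes a triviality (just take the neighborhood supplied by the definition at the point $x$), while the converse is the substantive step. The paper handles the converse not by any gluing of charts but by using the group action: given $y\in G(x)$, pick $f\in G$ with $f(x)=y$; since $f$ is a diffeomorphism and $\overline{G(x)}$ is $G$-invariant, one has $\overline{G(x)}\cap f(O_{x})=f\bigl(\overline{G(x)}\cap O_{x}\bigr)$, which is again a manifold of dimension $r$. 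Thus a single good neighborhood at one orbit point is transported to every other point. This stronger form is what is actually invoked in the proof of Theorem~\ref{T:2}, where only a neighborhood $W$ of $x$ (not of the full orbit) is produced; your purely topological gluing proof, while valid, does not use the $G$-action and would not by itself justify that application.
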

\smallskip

\begin{proof} The directly sens is obvious by definition.
Conversely, let $x\in G(u)$ and $O$ be an open neighborhood of $x$
such that
 $\overline{G(u)}\cap O_{x}$ is a manifold with dimension $r\geq 0$ over $\mathbb{C}$. Let $y\in
G(u)$, then $y=f(x)$ for some $f\in G$. So $O'_{x}=f(O_{x})$ is an
neighborhood of $y$ and satisfying $\overline{G(u)}\cap
O'_{x}=f(\overline{G(u)}\cap O)$ is a manifold with dimension
$r\geq 0$ over $\mathbb{C}$. It follows that $G(u)$ is regular
with order $r$.
\end{proof}
\smallskip

\subsection{{\it Whitney Topology on
$C^{0}(\mathbb{C}^{n},\mathbb{C}^{n})$}}We will use the definition
of Whitney topology given in ~\cite{WDM}.
 For each open subset $U\subset \mathbb{C}^{n}\times \mathbb{C}^{n}$ let $\widetilde{U}\subset \mathcal{C}^{0}(\mathbb{C}^{n}, \mathbb{C}^{n})$ be the
set of continuous functions $g$, whose graphs
$\{(x,g(x))\in\mathbb{C}^{n}\times \mathbb{C}^{n},\ \ x\in
\mathbb{C}^{n}\}$ is contained in $U$. \smallskip We want to
construct a neighborhood basis of each function
$f\in\mathcal{C}^{0}(\mathbb{C}^{n}, \mathbb{C}^{n})$. Let
$K_{j}=\{x\in \mathbb{C}^{n},\ \ \|x\|\leq j\}$ be a countable
family of compact sets (closed balls with center 0) covering
$\mathbb{C}^{n}$ such that $K_{j}$ is contained in the interior of
$K_{j+1}$. Consider then the compact subsets $L_{j}
=K_{j}\backslash \overset{\circ}{\overbrace{K_{j-1}}}$, which are
compact sets, too. Let $\epsilon= (\varepsilon_{j})_{j}$ be a
sequence of positive numbers and then define $V_{(f;\epsilon)}
=\{f\in\mathcal{C}^{0}(\mathbb{C}^{n}, \mathbb{C}^{n})\ :\
\|f(x)-g(x)\|< \varepsilon_{j},\ \mathrm{for\ any} \ x\in L_{j},\
\forall j\}.$ We claim this is a neighborhood system of the
function $f$ in $\mathcal{C}^{0}(\mathbb{C}^{n}, \mathbb{C}^{n})$.
Since $L_{i}$ is compact, the set
$U=\{(x,y)\in\mathbb{C}^{n}\times\mathbb{C}^{n}\ :\ \|f(x)-y\|<
\varepsilon_{j},\ if\  x\in L_{j}\}$ is open. Thus,
$V_{(f;\epsilon)}= \widetilde{U}$ is an open neighborhood of $f$.
On the other hand, if $O$ is an open subset of
$\mathbb{C}^{n}\times\mathbb{C}^{n}$ which contains the graph of
$f$, then since $L_{j}$ is compact, it follows that there exists
$\varepsilon_{j}>0$ such that if $x\in L_{j}$ and $\|y-f(x)\|<
\varepsilon_{j}$, then $(x;y)\in O$. Thus, taking
$\widetilde{\epsilon}= (\varepsilon_{j})_{j}$ we have
$V_{(f;\widetilde{\epsilon})}\subset \widetilde{O}$, so we have
obtained the family $V_{(f;\epsilon)}$ is a neighborhood system of
$f$. Moreover, for each given $\epsilon= (\varepsilon_{j})_{j}$,
we can find a $C ^{\infty}$-function $\epsilon:
\mathbb{C}^{n}\longrightarrow\mathbb{R}_{+}$, such that
$\epsilon(x)< \varepsilon_{j}$ for any $x\in L_{j}$. It follows
that the family
$V_{(f;\epsilon)}=\{g\in\mathcal{C}^{0}(\mathbb{C}^{n},
\mathbb{C}^{n})\ :\ \|f(x)-g(x)\|< \epsilon(x),\ \mathrm{for\
every}\ x\in \mathbb{C}^{n}\}$ is also a neighborhood system.
\smallskip

\subsection{{\it Linear map}} For a subset $E\subset\mathbb{C}^{n}$,  denote by $vect(E)$ the vector subspace
of $\mathbb{C}^{n}$ generated by all elements of $E$. $E$  is
called $G$-invariant if $f(E)\subset E$ for any $f\in G$; that is
$E$ is a union of orbits. Set $\mathcal{A}(G)$ be the algebra
generated by $G$. For a fixed vector $x\in
\mathbb{C}^{n}\backslash\{0\}$, denote by:
\\
- $\Phi_{x}: \mathcal{A}(G) \longrightarrow
\Phi_{x}(\mathcal{A}(G))\subset\mathbb{C}^{n}$ the linear map
given by $\Phi_{x}(f)=f(x)$.\ \\ - $E(x)
=\Phi_{x}(\mathcal{A}(G))$.\
\smallskip

\begin{lem}\label{L:aaaaa101001} The linear map
$\Phi_{x}:\mathcal{A}(G)\longrightarrow E(x)$ is continuous.
\end{lem}
\smallskip

\begin{proof} Firstly, we take the restriction of the Whitney
topology to $\mathcal{A}(G)$. Secondly, let $f\in \mathcal{A}(G)$
and $\varepsilon>0$. Then for $\epsilon=(\varepsilon_{j})_{j}$
with $\varepsilon_{j}=\varepsilon$ and for $V_{(f;\epsilon)}$ be a
neighborhood system of $f$, we obtain: for every $g\in
V_{(f;\epsilon)}\cap \mathcal{A}(G)$ and for every $y\in L_{j}$,
$\|f(y)-g(y)\|<\varepsilon$, $\forall j$. In particular for
$j=j_{0}$ in which $x\in L_{j_{0}}$, we have
$\|f(x)-g(x)\|<\varepsilon$, so
$\|\Phi_{x}(f)-\Phi_{x}(g)\|<\varepsilon$. It follows that
$\Phi_{x}$ is continuous.
\end{proof}
\smallskip

\
\\
Denote by:\ \\ - $r(x)=dim(E(x))$.\ \\ - $U_{j}=\{y\in
\mathbb{C}^{n},\ \ r(y)\geq j\}$.
\begin{prop}\label{p:100} Let $G$ be a subgroup of
 $Diff^{1}(\mathbb{C}^{n})$. Suppose that $G$ has a
 dense orbit. Then $U_{n}$ is a $G$-invariant open subset of $\mathbb{C}^{n}$.
\end{prop}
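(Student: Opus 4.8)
The plan is to establish the two assertions separately: first that $r$ is constant on each $G$-orbit, which gives $G$-invariance of every $U_{j}$ and in particular of $U_{n}$, and then that $\{y:\ r(y)=n\}$ is open by the usual lower semicontinuity of rank.

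For the invariance, I would fix $f\in G$ and $y\in\mathbb{C}^{n}$ and write $E(f(y))=\{g(f(y)):\ g\in\mathcal{A}(G)\}=\{(g\circ f)(y):\ g\in\mathcal{A}(G)\}$. Since $\mathcal{A}(G)$ is the algebra generated by $G$, it is closed under composition with $f\in G$, so $g\circ f\in\mathcal{A}(G)$ and therefore $E(f(y))\subseteq E(y)$. Because $G$ is a group, $f^{-1}\in G$, and applying the same inclusion to $f^{-1}$ at the point $f(y)$ gives $E(y)=E\bigl(f^{-1}(f(y))\bigr)\subseteq E(f(y))$. Hence $E(f(y))=E(y)$ and $r(f(y))=r(y)$; in particular $r(y)\geq n$ if and only if $r(f(y))\geq n$, so $U_{n}$ is $G$-invariant.

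For openness, suppose $r(y_{0})=n$, that is $E(y_{0})=\mathbb{C}^{n}$. Then I can choose $g_{1},\dots,g_{n}\in\mathcal{A}(G)$ for which $g_{1}(y_{0}),\dots,g_{n}(y_{0})$ form a basis of $\mathbb{C}^{n}$. As $G\subset Diff^{1}(\mathbb{C}^{n})$, every element of $\mathcal{A}(G)$ is continuous, so the scalar function $D(y)=\det\bigl[g_{1}(y),\dots,g_{n}(y)\bigr]$ (the determinant of the matrix with columns $g_{i}(y)$) is continuous and $D(y_{0})\neq 0$. By continuity, $D$ is nonzero on some open neighborhood $O$ of $y_{0}$; for each $y\in O$ the vectors $g_{1}(y),\dots,g_{n}(y)\in E(y)$ are then linearly independent, forcing $r(y)=n$. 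Thus $O\subset U_{n}$ and $U_{n}$ is open.

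The standing hypothesis that $G$ has a dense orbit is not strictly needed for openness or invariance, but it guarantees that $U_{n}$ is nonempty and hence that the statement is substantive: if $G(x_{0})$ is dense then $\{g(x_{0}):\ g\in G\}$ cannot be contained in any proper (closed) linear subspace, so $E(x_{0})=\mathbb{C}^{n}$ and $x_{0}\in U_{n}$. I expect the only delicate point to be the algebraic bookkeeping, namely confirming that $\mathcal{A}(G)$ is genuinely closed under composition by elements of $G$ (so that both inclusions in the invariance step are legitimate) and that its elements remain continuous so that the determinant argument applies; both follow at once from $\mathcal{A}(G)$ being the algebra generated by the continuous maps in $G$.
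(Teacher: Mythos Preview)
Your argument is correct and follows essentially the same route as the paper: nonemptiness from the dense-orbit hypothesis (since $G(x_{0})\subset E(x_{0})$ and a dense set cannot lie in a proper linear subspace), and openness via a continuous determinant built from $n$ elements of $\mathcal{A}(G)$ that are independent at the given point. The only cosmetic difference is that the paper uses the Gram determinant $\det\bigl(\langle f_{i}(z),f_{j}(z)\rangle\bigr)$ rather than the column determinant you chose; your version is equally valid. In fact your proof is slightly more complete than the paper's, which asserts $G$-invariance in the statement but never verifies it; your argument that $E(f(y))=E(y)$ via closure of $\mathcal{A}(G)$ under right composition by $f$ and $f^{-1}$ fills that gap cleanly.
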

\
\\

\begin{proof} Let $x\in \mathbb{C}^{n}$ such that $\overline{G(x)}=\mathbb{C}^{n}$,  then $x\in U_{n}$ and so $U_{n}\neq\emptyset$.
Let $y\in U_{n}$, then $E(y)=\mathbb{C}^{n}$ and so, there exist
$f_{1},\dots, f_{n}\in F_{y}$ such that the $n$ vectors
$f_{1}(y),\dots, f_{n}(y)$ are linearly independent in
$\mathbb{C}^{n}$. For all $z\in \mathbb{C}^{n}$, we consider the
Gram's determinant $$\Delta(z)=det\left(\langle f_{i}(z)\ |\
f_{j}(z)\rangle\right)_{1\leq i,j\leq n}$$ of the vectors
$f_{1}(z),\dots, f_{n}(z)$ where $\langle. | . \rangle$ denotes
the scalar product in $\mathbb{C}^{n}$. It is well known that
these vectors are independent if and only if $\Delta(z) \neq 0$,
in particular $\Delta(y)\neq 0$. Let $$V_{y}=\left\{z\in
\mathbb{C}^{n},\ \ \Delta(z) \neq 0\right\}$$ The set $V_{y}$
  is open in $\mathbb{C}^{n}$, because the map $z\longmapsto \Delta(z)$ is continuous. Now $\Delta(y)\neq 0$, and so
   $y\in V_{y}\subset U_{n}$.\
     The proof is completed.
\end{proof}
\medskip

The construction of the open $U$ given in \cite{YaHaA}, is the
same of $U_{n}$ if $G$ has a dense orbit.\
\\
 \begin{lem}\label{C1212:4}$($\cite{YaHaA}, Corollary 1.2$)$ Let $G$ be an abelian subgroup of  $Diff^{1}(\mathbb{C}^{n})$,
such that $0\in Fix(G)$ and $dim(vect(L_{G}))=n$. If $G$ has a
dense orbit then every orbit in $U_{n}$ is dense in
$\mathbb{C}^{n}$.
\end{lem}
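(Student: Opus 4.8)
The plan is to prove that for every $y\in U_{n}$ one has $\overline{G(y)}=\mathbb{C}^{n}$; recall $U_{n}$ is exactly the locus where $E(y)=vect(G(y))=\mathbb{C}^{n}$, i.e. $r(y)=n$. Fix once and for all a point $x_{0}$ with $\overline{G(x_{0})}=\mathbb{C}^{n}$; by Proposition \ref{p:100} we have $x_{0}\in U_{n}$. The starting observation is that the relation $y\preceq x \Longleftrightarrow y\in\overline{G(x)}$ is reflexive and transitive: if $y\in\overline{G(x)}$ then $G(y)\subset\overline{G(x)}$, and since $\overline{G(x)}$ is closed we get $\overline{G(y)}\subset\overline{G(x)}$. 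Consequently the whole lemma reduces to the single statement that $\preceq$ is \emph{symmetric} on $U_{n}$: indeed, any $y\in U_{n}$ satisfies $y\in\mathbb{C}^{n}=\overline{G(x_{0})}$, i.e. $y\preceq x_{0}$, and symmetry then yields $x_{0}\preceq y$, that is $x_{0}\in\overline{G(y)}$, whence $\mathbb{C}^{n}=\overline{G(x_{0})}\subset\overline{G(y)}$.

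To establish symmetry I would exploit the commutativity of $G$ together with the defining property of $U_{n}$. Suppose $y\preceq x$ with $x,y\in U_{n}$, and choose $g_{k}\in G$ with $g_{k}(x)\to y$. Since $y\in U_{n}$, one may pick $f_{1},\dots,f_{n}\in G$ for which $f_{1}(y),\dots,f_{n}(y)$ form a basis of $\mathbb{C}^{n}$; because $G$ is abelian, $g_{k}(f_{i}(x))=f_{i}(g_{k}(x))\to f_{i}(y)$ for each $i$. Thus along the sequence $g_{k}$ the $n$ orbit points $f_{i}(x)$ are driven onto a basis of $\mathbb{C}^{n}$. The key point is that the condition $r(y)=n$ forces the limiting behaviour of $g_{k}$ (and of $Dg_{k}$) to be nondegenerate near $y$, so that the maps $g_{k}$ can be reversed: one extracts a subsequence converging, in the closure $\overline{G}$ taken inside $Diff^{1}(\mathbb{C}^{n})$ with respect to the Whitney topology used above, to a local $C^{1}$-diffeomorphism germ $h$ with $h(x)=y$ and $Dh$ invertible. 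Inverting $h$ and approximating $h^{-1}$ by elements of $G$ produces a sequence in $G$ carrying $y$ back to $x$, i.e. $x\in\overline{G(y)}$ and $x\preceq y$.

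The mechanism that guarantees invertibility, and hence the reversal, is the structure theory for abelian groups fixing a point. Using $0\in Fix(G)$, $\dim(vect(L_{G}))=n$ and abelianness, one conjugates $G$ to the normal form of \cite{aAhM05} generalized in \cite{YaHaA}: a block-triangular model in which each element acts through the exponential of the abelian Lie algebra $\mathrm{g}$. In these coordinates $U_{n}$ is precisely the region where the relevant leading coordinates are nonzero, the closure $\overline{G}$ is a genuine closed abelian group of transformations, and accessibility under this group is visibly symmetric on $U_{n}$; this is what upgrades the one-sided convergence $g_{k}(x)\to y$ into the two-sided relation $x\in\overline{G(y)}$.

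The main obstacle is exactly this reversal step. For a general (even merely continuous) abelian action the relation $\preceq$ need not be symmetric, and one-sided accumulation $g_{k}(x)\to y$ carries no information whatsoever about $g_{k}^{-1}(y)$. What rescues the argument is that on $U_{n}$ the span condition $r(y)=n$ pins down the limit germ as an \emph{invertible} $C^{1}$-map and places it inside the group $\overline{G}$. Verifying that the limits of the $g_{k}$ indeed form such a group, and that $U_{n}$ is the exact locus of this nondegeneracy, is the technical heart of the proof and is where the normal form of \cite{YaHaA} becomes indispensable.
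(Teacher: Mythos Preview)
The paper does not prove this lemma at all: it is quoted verbatim as Corollary~1.2 of \cite{YaHaA} and used as a black box in the proof of Theorem~\ref{T:01}. So there is no ``paper's own proof'' to compare against; the question is only whether your argument stands on its own.

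Your reduction is clean and correct: with $x_{0}$ a point of dense orbit, transitivity of the relation $y\preceq x\Longleftrightarrow y\in\overline{G(x)}$ shows that it suffices to prove $\preceq$ is symmetric on $U_{n}$. The gap is entirely in the symmetry step. From $g_{k}(x)\to y$ you only have convergence at the single point $x$ (and, via commutativity, at the finitely many points $f_{1}(x),\dots,f_{n}(x)$). This gives no control whatsoever over $g_{k}$ as maps: there is no compactness in the Whitney $C^{0}$ or $C^{1}$ topology that would let you ``extract a subsequence converging to a local $C^{1}$-diffeomorphism germ $h$,'' and even if some limit $h$ existed there is no reason it should lie in a group that $G$ approximates, so the step ``approximate $h^{-1}$ by elements of $G$'' is unsupported. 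You acknowledge this yourself in the last paragraph, but then defer the whole difficulty to the normal form of \cite{YaHaA} without carrying anything out. That is precisely the content of the cited corollary, so at this point your argument is circular: the only substantive input is the result you are trying to prove.

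If you want an independent proof, you must actually produce the normal form (or an equivalent structural statement) and verify directly, in coordinates, that on $U_{n}$ orbit closures coincide. The abstract reversal argument via limits of $g_{k}$ cannot be made to work without that input.
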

\medskip

\begin{proof}[Proof of Theorem~\ref{T:01}] Suppose that the group $G$ has a dense orbit denoted
by $G(x)$, $x\in \mathbb{C}^{n}$. Let $\mathcal{L}$ be the set of
all dense orbits, so $\mathcal{L}\neq\emptyset$ since $x\in
\mathcal{L}$. let $y\in \mathcal{L}$ then $y\in U_{n}$ and
$U_{n}\subset \overline{G(y)}$. By Lemma~\ref{C1212:4}, for each
$z\in  U_{n}$, one has $\overline{G(z)}=\mathbb{C}^{n}$, so $y\in
U_{n}\subset \mathcal{L}$. By Proposition~\ref{p:100}, $U_{n}$ is
a non empty open subset of $\mathbb{C}^{n}$. This completes the
proof.
\end{proof}
\medskip

\begin{rem} By the proof of Theorem ~\ref{T:01},
$U_{n}=\mathcal{L}$.
\end{rem}
\medskip

\begin{proof}[Proof of Corollary~\ref{C:01}] Suppose that the action
of the group $G$ is $p$-chaotic, then $G$ has a dense orbit
denoted by $G(x)$, $x\in \mathbb{C}^{n}$. By Theorem~\ref{T:01},
the set $\mathcal{L}$ of all dense orbit is a dense open set in
$\mathbb{C}^{n}$. This means that if $\mathcal{P}$ is the union of
all regular orbits with order $p$, then $\mathcal{L}\cap
\mathcal{P}=\emptyset$, so $\mathcal{P}$ can not be dense in
$\mathbb{C}^{n}$. The proof is completed.
\end{proof}
\smallskip

\section{{\bf Regular action of abelian lie subgroups of $Diff^{1}(\mathbb{C}^{n})$}} \
We will cite the definition of the exponential map given in
~\cite{IL}.
\subsection{{\it  Exponential map }} In this section, we illustrate
the theory developed of the group $Diff(\mathbb{C}^{n})$ of
diffeomorphisms of $\mathbb{C}^{n}$. For simplicity, throughout
this section we only consider the case of $\mathbb{C}=\mathbb{R}$;
however, all results also hold for complexes case. The group
$Diff(\mathbb{R}^{n})$ is not a Lie group (it is
infinite-dimensional), but in many way it is similar to Lie
groups. For example, it easy to define what a smooth map from some
Lie group $G$ to $Diff(\mathbb{R}^{n})$ is: it is the same as an
action of $G$ on $\mathbb{R}^{n}$ by diffeomorphisms. Ignoring the
technical problem with infinite-dimensionality for now, let us try
to see what is the natural analog of the Lie algebra $\mathrm{g}$
for the group $G$. It should be the tangent space at the identity;
thus, its elements are derivatives of one-parameter families of
diffeomorphisms.\ \\ Let $\varphi^{t}: G\longrightarrow G$ be
one-parameter family of diffeomorphisms. Then, for every point
$a\in G$, $\varphi^{t}(a)$ is a curve in $G$ and thus
$\frac{\partial}{\partial t}\varphi^{t}(a)_{/t=0}=\xi(a)\in
T_{a}G$ is a tangent vector to $G$ at $a$. In other words,
$\frac{\partial}{\partial t}\varphi^{t}$ is a vector field on
$G$.\smallskip

\ \\ The exponential map $exp: \mathrm{g}\longrightarrow G$ is
defined by $exp(x)=\gamma_{x}(1)$ where $\gamma_{x}(t)$ is the
one-parameter subgroup with tangent vector at $1$ equal to $x$.\
\\ If $\xi\in\mathrm{g}$ is a vectorfield, then $exp(t\xi)$ should
be one-parameter family of diffeomorphisms whose derivative is
vector field $\xi$. So this is the solution of differential
equation $$\frac{\partial}{\partial
t}\varphi^{t}(a)_{/t=0}=\xi(a).$$ \ \\ In other words,
$\varphi^{t}$ is the time $t$ flow of the vector field. Thus, it
is natural to define the Lie algebra of $G$ to be the space
$\mathrm{g}$ of all smooth vector $\xi$ fields on $\mathbb{R}^{n}$
such that $exp(t\xi)\in G$ for every $t\in \mathbb{R}$.
\smallskip

 \begin{prop}\label{p:000+1} $($\cite{IL}, Theorem 3.29$)$ Let $G$ be a Lie group acting on  $\mathbb{C}^{n}$ with
 lie algebra $\mathrm{g}$  and let $x \in \mathbb{C}^{n}$.\
 \\
(i) The stabilizer $G_{x} = \{f \in G :\ \ f(x) = x\}$ is a closed
Lie subgroup in $G$, with Lie algebra $\mathfrak{h}_{x} = \{f \in
\mathrm{g}:\  f(x) = 0\}$. \
\\
(ii) The map $G_{/G_{x}}\longrightarrow \mathbb{C}^{n}$ given by
$f.G_{x}\longmapsto f(x)$ is an immersion. Thus,
 the orbit $G(u)$ is an immersed submanifold in $\mathbb{C}^{n}$.
 In particular
 $\mathrm{dim}(G(x))=\mathrm{dim}(\mathrm{g})-\mathrm{dim}(\mathfrak{h}_{x})$.
\end{prop}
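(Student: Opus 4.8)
The plan is to realize the orbit as the image of the homogeneous space $G/G_x$ under the descended orbit map, and then to verify injectivity and the immersion property infinitesimally. Throughout, write $\alpha_x : G \to \mathbb{C}^n$ for the orbit map $\alpha_x(f) = f(x)$; it is smooth because the action is smooth, and it is equivariant for left translation on $G$ and the given $G$-action on $\mathbb{C}^n$, in the sense that $\alpha_x(fh) = f(\alpha_x(h))$.

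For part (i), I would first observe that $G_x = \alpha_x^{-1}(\{x\})$ is the preimage of a point under a continuous map, hence closed in $G$; by Cartan's closed subgroup theorem it is therefore an embedded Lie subgroup. To identify its Lie algebra, recall that $\xi \in \mathrm{g}$ lies in $\mathrm{Lie}(G_x)$ if and only if $exp(t\xi) \in G_x$ for all $t$, that is $exp(t\xi)\cdot x = x$ for all $t$. Differentiating this identity at $t=0$ gives $\xi(x)=0$, where $\xi(x)$ is the value at $x$ of the vector field on $\mathbb{C}^n$ induced by $\xi$; conversely, if $\xi(x)=0$ then $x$ is an equilibrium of the flow of $\xi$, so $exp(t\xi)\cdot x = x$ for all $t$ and $\xi \in \mathrm{Lie}(G_x)$. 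Hence $\mathrm{Lie}(G_x) = \{\xi \in \mathrm{g} : \xi(x)=0\} = \mathfrak{h}_x$.

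For part (ii), since $G_x$ is a closed subgroup, the quotient $G/G_x$ carries a unique smooth manifold structure making the projection $\pi : G \to G/G_x$ a submersion. As $\alpha_x$ is constant on the left cosets $fG_x$ (because $fg\cdot x = f\cdot x$ for $g\in G_x$), it factors through a smooth map $\overline{\alpha}_x : G/G_x \to \mathbb{C}^n$, $\overline{\alpha}_x(fG_x)=f(x)$. This map is injective, since $f(x)=f'(x)$ forces $(f')^{-1}f \in G_x$, i.e. $fG_x = f'G_x$. At the base coset $eG_x$ the tangent space is $\mathrm{g}/\mathfrak{h}_x$, and the differential of $\overline{\alpha}_x$ sends $\xi + \mathfrak{h}_x$ to $\xi(x)\in T_x\mathbb{C}^n$; by the description of $\mathfrak{h}_x$ from part (i) this linear map has trivial kernel, so $\overline{\alpha}_x$ is an immersion at $eG_x$. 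Equivariance then propagates the immersion property everywhere: for each $f\in G$, left translation $L_f$ on $G/G_x$ and the diffeomorphism $f$ of $\mathbb{C}^n$ satisfy $\overline{\alpha}_x\circ L_f = f\circ\overline{\alpha}_x$, and differentiating shows the rank of $d\overline{\alpha}_x$ is constant along cosets, hence maximal everywhere. Thus $\overline{\alpha}_x$ is an injective immersion, its image $G(x)$ is an immersed submanifold, and $\dim G(x) = \dim(G/G_x) = \dim\mathrm{g} - \dim\mathfrak{h}_x$.

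The main obstacle is the one genuinely technical step, the equivalence $\mathrm{Lie}(G_x)=\mathfrak{h}_x$, which hinges on the curve $t\mapsto exp(t\xi)$ being precisely the flow of the vector field $\xi$ on $\mathbb{C}^n$, so that fixing $x$ is equivalent to $\xi(x)=0$. Because $G$ is taken to be a finite-dimensional Lie group acting smoothly, one sidesteps the infinite-dimensionality of $Diff^1(\mathbb{C}^n)$: the tools invoked are Cartan's theorem and the quotient manifold theorem in finite dimensions, together with the identification of $exp$ with the time-one flow recalled in the preceding subsection.
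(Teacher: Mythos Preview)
Your proof is the standard textbook argument and is correct. However, the paper does not supply its own proof of this proposition: it is stated as a citation of Theorem~3.29 in~\cite{IL} and is used as a black box in the sequel. What you have written is essentially the proof one finds in Kirillov's text (closedness of the stabilizer via continuity and Cartan's theorem, identification of $\mathfrak{h}_x$ via one-parameter subgroups, and the injective-immersion argument through the quotient $G/G_x$ using equivariance), so there is nothing to compare against beyond noting that your argument matches the cited reference rather than anything original to the paper.
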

\

Denote by  $p=dim(\mathrm{g})$. Since $G$ is abelian so is
$\mathrm{g}$. Set $f_{1},\dots, f_{p}\in \mathrm{g}$ be the
generators of $\mathrm{g}$.\ We let:
\\
- $exp: \mathrm{g}\longrightarrow G$ the lie exponential map
associated to $G$. \ \\ - $G_{0}$ be the connected component of
$G$ containing the identity map $id$. So $G_{0}$ is generated by
$exp(\mathrm{g})$ and it is an abelian lie subgroup of $G$. Since
$\mathrm{g}$ is abelian, $G_{0}=exp(\mathrm{g})$.\ \\ For a fixed
point $x\in \mathbb{C}^{n}$, denote by:\ \\ - $G_{x}=\{f\in
G_{0},\ \ f(x)=x \}$  the stabilizer of $G_{0}$ on the point $x$.
It is a lie subgroup of $G_{0}$.
\
\\
Denote by:\ \\ - $H$  be the algebra associated to $G_{x}$ and
$F_{x}$ is the supplement of $H_{x}$ in $\mathrm{g}$ (i.e.
$F_{x}\oplus H_{x}=\mathrm{g}$). By Proposition ~\ref{p:000+1}, we
have $H_{x}=\{f\in \mathrm{g},\ \ f(x)=0\}$ and
$$G_{0}=exp(F_{x})\circ exp(H_{x}).$$ In particular
$G_{0}(x)=\Phi_{x}(exp(F_{x}))$.\ \\ -
$V=\{exp(t_{1}f_{1}+\dots+t_{p}f_{p}),\ \ \ |t_{k}|<1\}$.
 \smallskip

\begin{prop}\label{p:1} Let $G$ be an abelian subgroup of $Diff^{1}(\mathbb{C}^{n})$, and $x\in \mathbb{C}^{n}$.
Then:\ \\ (i) $G_{0}(x)$ is the connected component of $G(x)$
containing $x$.\ \\ (ii) The restriction  $\Phi^{(1)}_{x}:
exp(F_{x})\cap V\longrightarrow \Phi_{x}(exp(F_{x})\cap V)\subset
G_{0}(x)$ of $\Phi_{x}$ to $exp(F_{x})\cap V$ is an homeomorphism.
\end{prop}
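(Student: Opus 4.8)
The plan is to prove (ii) first, since the local homeomorphism it supplies is exactly the tool needed to identify the connected component in (i). Continuity of $\Phi^{(1)}_{x}$ is immediate: by Lemma~\ref{L:aaaaa101001} the evaluation map $\Phi_{x}$ is continuous on $\mathcal{A}(G)$ for the (restricted) Whitney topology, and $\Phi^{(1)}_{x}$ is merely its restriction to $exp(F_{x})\cap V$. Recall also that the splitting $\mathrm{g}=F_{x}\oplus H_{x}$ together with the factorization $G_{0}=exp(F_{x})\circ exp(H_{x})$ already gives $\Phi_{x}(exp(F_{x}))=G_{0}(x)$, so the image of $\Phi^{(1)}_{x}$ is a subset of $G_{0}(x)$ that is a neighborhood of $x$ in the appropriate sense.

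For injectivity I would exploit that $G$ is abelian: if $\xi,\eta\in F_{x}$ satisfy $exp(\xi)(x)=exp(\eta)(x)$, then $exp(\xi-\eta)=exp(-\eta)\,exp(\xi)$ fixes $x$, so $exp(\xi-\eta)\in G_{x}$; choosing $V$ small enough that $exp$ is a diffeomorphism onto a neighborhood of $id$ in which $G_{x}$ corresponds exactly to $exp(H_{x})$, this forces $\xi-\eta\in H_{x}$, whence $\xi-\eta\in F_{x}\cap H_{x}=\{0\}$ and $\xi=\eta$. To upgrade this continuous injection to a homeomorphism onto its image, I would invoke Proposition~\ref{p:000+1}(ii): the orbit map induces an immersion $G_{0}/G_{x}\To\mathbb{C}^{n}$, and the composite $F_{x}\cap V\To G_{0}\To G_{0}/G_{x}$ is a local diffeomorphism at $0$ because $F_{x}$ is transverse to $H_{x}=\mathrm{Lie}(G_{x})$. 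Shrinking $V$ so that the immersion is an embedding on the resulting chart, the composite $\Phi^{(1)}_{x}$ becomes an embedding, hence a homeomorphism onto $\Phi_{x}(exp(F_{x})\cap V)$.

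For (i), note that $G_{0}$, being the identity component of the Lie group $G$, is connected and open in $G$; hence $G_{0}(x)=\Phi_{x}(G_{0})$ is connected and contains $x$. Since $G_{0}\subseteq G$, the orbit $G(x)$ is $G_{0}$-invariant, and the $G_{0}$-orbits partition it into pairwise disjoint connected subsets. By part (ii), $\Phi_{x}(exp(F_{x})\cap V)$ is a relatively open neighborhood of $x$ in $G(x)$, and translating by elements of $G_{0}$ (which act by homeomorphisms and transitively on $G_{0}(x)$) shows $G_{0}(x)$ is open in $G(x)$; by the same homogeneity every $G_{0}$-orbit in $G(x)$ is open, hence each is also closed in $G(x)$. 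Being connected and clopen, $G_{0}(x)$ is exactly the connected component of $G(x)$ containing $x$.

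The main obstacle is precisely the homeomorphism claim in (ii), and with it the openness of $G_{0}(x)$ in $G(x)$ used in (i): an injective immersion need not be an embedding (dense one-parameter subgroups of a torus are the standard warning), so the argument genuinely requires restricting to a small $V$ on which the immersion of Proposition~\ref{p:000+1}(ii) is an embedding; equivalently, one must verify directly that the inverse of $\Phi^{(1)}_{x}$ is continuous there. This same smallness of $V$ is what validates the implication ``$exp(\zeta)\in G_{x}\Rightarrow\zeta\in H_{x}$'' in the injectivity step, so the entire proposition hinges on making this local analysis precise.
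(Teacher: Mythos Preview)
Your plan is sound but departs from the paper's argument in two ways. The paper proves (i) first and very tersely: it only observes that $G_{0}(x)=\Phi_{x}(G_{0})$ is the continuous image of a connected set, which strictly speaking shows connectedness but not that $G_{0}(x)$ is the \emph{full} component; your clopen argument via openness of $G_{0}$-orbits in $G(x)$ is in fact more complete on this point. For (ii), the paper's injectivity step is essentially yours (it asserts $G_{x}\cap\exp(F_{x})=\{id\}$ and concludes $f=g$), but for continuity of the inverse the paper does \emph{not} invoke the immersion of Proposition~\ref{p:000+1} or shrink $V$. Instead it argues sequentially: if $y_{m}\to y$ with $y_{m}=\exp\bigl(\sum_{k}t_{k,m}f_{k}\bigr)(x)$ and $|t_{k,m}|<1$, one passes to a subsequence so that $t_{k,m}\to s_{k}$ with $|s_{k}|\le1$; continuity of $\exp$ and of $\Phi_{x}$ (Lemma~\ref{L:aaaaa101001}) then gives $\exp\bigl(\sum_{k}s_{k}f_{k}\bigr)(x)=y$, and injectivity forces $s_{k}=t_{k}$, so $(\Phi^{(1)}_{x})^{-1}(y_{m})\to(\Phi^{(1)}_{x})^{-1}(y)$. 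This is more elementary---the compactness of $[-1,1]^{q}$ replaces the immersed/embedded analysis you propose---while your route through Proposition~\ref{p:000+1}(ii) is more conceptual and makes transparent why a smallness condition on $V$ is really what drives both the injectivity and the openness of the inverse.
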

\medskip

\begin{proof} (i) By Lemma ~\ref{L:aaaaa101001}, the map $\Phi_{x}: \mathcal{A} (G)\longrightarrow E(x)\subset \mathbb{C}^{n}$ is a
continuous surjective linear map. The proof follows then from the
fact that $G_{0}(x)= \Phi_{x}(G_{0})$ and $G_{0}$ is connected.
\
\\ (ii) By Lemma ~\ref{L:aaaaa101001}, the map $\Phi^{(1)}_{x}$ is continuous, surjective.\ \\ It is injective: Indded, if $f,g\in exp(F_{x})\cap V$
such that $\Phi^{(1)}_{x}(f)=\Phi^{(1)}_{x}(g)$, then $f(x)=g(x)$,
so $g^{-1}\circ f(x)=x$. Hence $g^{-1}\circ f\in G_{x}\cap
exp(F_{x})=\{id\}$. It follows that $f=g$. \ \\
$(\Phi^{(1)}_{x})^{-1}:\Phi_{x}(exp(F_{x})\cap V)\longrightarrow
exp(F_{x})\cap V$ is continuous; inded, let $y=exp(f)(x)\in
\Phi_{x}(exp(F_{x})\cap V)$, $f\in F_{x}$ and $(y_{m})_{m}$ be a
sequence in $\Phi_{x}(exp(F_{x})\cap V)$  tending to $y$. Let
$(f_{1},\dots, f_{q})$ be a basis of $F_{x}$ and set
$y_{m}=exp(t_{1,m}f_{1}+\dots+t_{q,m}f_{q})(x)$ and
$y=exp(t_{1}f_{1}+\dots+t_{q}f_{q})(x)$, with $|t_{k}|< 1$ and
$|t_{k,m}|<1$. We can assume (leaving to take a subsequence) that
$\underset{m\to +\infty}{lim}t_{k,m}=s_{k}$, with $|s_{k}|\leq 1$
for every $k=1,\dots, q$. Write
$g=exp(s_{1}f_{1}+\dots+s_{q}f_{q})$ and
$g_{m}=exp(t_{1}f_{1}+\dots+t_{q}f_{q})$. By continuity of the
exponential map we have $(g_{m})_{m}$ tends to $g$ when $m\to
+\infty$. By continuity of $\Phi_{x}$ (Lemma ~\ref{L:aaaaa101001})
we obtain $y_{m}=\Phi_{x}(g_{m})$ tends to $y=\Phi_{x}(g)$, so
$s_{k}=t_{k}$ for every $k=1,\dots, p$. As
$g=(\Phi^{(1)}_{x})^{-1}(y)$ and
$g_{m}=(\Phi^{(1)}_{x})^{-1}(y_{m})$, it follows that
$(\Phi^{(1)}_{x})^{-1}(y_{m})$ tends to
$(\Phi^{(1)}_{x})^{-1}(y)$. This completes the proof.
\end{proof}
\smallskip

\subsection{{\it Wedge, Lie wedge and almost abelian notions}}
We will use the notion of wedge and Lie wedge given by K.H.
Hofmann in ~\cite{KHH} and ~\cite{KHH2}: \ \\ - A {\it wedge} or a
closed convex cone in a finite dimensional vector $\mathrm{g}$ is
a topologically subset $\omega$ with  $\omega+\omega=\omega$ and
$\lambda.\omega\subset \omega$ for every $\lambda\geq 0$. In
particular, any vector subspace of $\mathrm{g}$ is a wedge in
$\mathrm{g}$.\
\\ - $h(\omega)=(-\omega)\cap \omega$ is called the edge of the wedge.\ \\ - A
{\it Lie wedge} $\omega$ in a Lie algebra $\mathrm{g}$ is a wedge
such that $$exp(ad(x))\omega=\omega,\ \ \mathrm{for\ all}\ x\in
h(\omega).$$ In particular, any subalgebra of $\mathrm{g}$ is a
Lie wedge in $\mathrm{g}$.\ \\ -  A Lie algebra $\eta$ is called
{\it almost abelian} if there is a linear form
$\alpha:\eta\longrightarrow \mathbb{R}$ such that the bracket is
given by $$[X, Y]=\alpha(X)Y-\alpha(Y)X.$$ In particular, any
abelian Lie algebra is almost abelian for  $\alpha=0$. If
$\alpha\neq 0$ the $\eta$ is called {\it truly almost abelian}. \
\  \\
\begin{lem}~\label{L1001:001}$($~\cite{KHH2}, Theorem 4.3$)$ Let
$\mathrm{g}$ be a Lie algebra, then the following are equivalent:\
\\ (i) $\mathrm{g}$ is almost abelian. \ \\ (ii) Every wedge is a
Lie wedge. \ \\ (iii) For every Lie wedge $\omega$, we have
$\overline{\langle exp(\omega)\rangle}=exp(\omega),$ where
$\langle exp(\omega)\rangle$ is the group generated by
$exp(\omega)$.
\end{lem}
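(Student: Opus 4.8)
The plan is to prove the cycle $(i)\Rightarrow(ii)\Rightarrow(i)$, which gives $(i)\Leftrightarrow(ii)$ by elementary linear algebra, and then to treat $(i)\Leftrightarrow(iii)$ separately by reducing to the normal form of an almost abelian algebra. Throughout I would exploit the defining identity $[X,Y]=\alpha(X)Y-\alpha(Y)X$ and two general wedge facts: $\omega+h(\omega)\subseteq\omega$, and $\lambda\omega\subseteq\omega$ for $\lambda\ge 0$.

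For $(i)\Rightarrow(ii)$ I would fix an arbitrary wedge $\omega$ and $x\in h(\omega)$ and compute $e^{\mathrm{ad}(x)}$ directly from the almost abelian bracket, where $\mathrm{ad}(x)v=\alpha(x)v-\alpha(v)x$. If $\alpha(x)=0$ then $\mathrm{ad}(x)^2=0$ and $e^{\mathrm{ad}(x)}v=v-\alpha(v)x$, which lies in $\omega+h(\omega)\subseteq\omega$ for $v\in\omega$ since $x\in h(\omega)$. If $\alpha(x)\neq 0$, decompose $\mathrm{g}=\ker\alpha\oplus\mathbb{R}x$; then $\mathrm{ad}(x)$ is multiplication by $\alpha(x)$ on $\ker\alpha$ and $0$ on $\mathbb{R}x$, so $e^{\mathrm{ad}(x)}v=e^{\alpha(x)}v+t(1-e^{\alpha(x)})x$ with $t=\alpha(v)/\alpha(x)$, again the sum of a positive multiple of $v$ (as $e^{\alpha(x)}>0$) and an element of $\mathbb{R}x\subseteq h(\omega)$. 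In both cases $e^{\mathrm{ad}(x)}\omega\subseteq\omega$, and applying the same to $-x\in h(\omega)$ yields equality, so every wedge is a Lie wedge.

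For $(ii)\Rightarrow(i)$ I would argue by contrapositive, using the classical fact that $\mathrm{g}$ is almost abelian precisely when every two-dimensional subspace is a subalgebra, i.e. $[X,Y]\in\mathrm{span}\{X,Y\}$ for all $X,Y$. If $\mathrm{g}$ is not almost abelian, choose $X_0,Y_0$ with $Z_0:=[X_0,Y_0]\notin\mathrm{span}\{X_0,Y_0\}$ and set $\omega=\mathbb{R}X_0+\mathbb{R}_{\ge 0}Y_0$, a wedge with edge $h(\omega)=\mathbb{R}X_0$. For small $t\neq 0$ the vector $tX_0$ lies in the edge, whereas $e^{\mathrm{ad}(tX_0)}Y_0=Y_0+tZ_0+O(t^2)$ has a nonzero component transverse to $\mathrm{span}\{X_0,Y_0\}$ and hence leaves $\omega$. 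Thus $\omega$ is a wedge that fails to be a Lie wedge, contradicting $(ii)$; this completes $(i)\Leftrightarrow(ii)$.

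Finally, for $(i)\Leftrightarrow(iii)$ I would reduce to the normal form of an almost abelian algebra: when $\alpha=0$ the algebra is abelian, and when $\alpha\neq 0$ it is $\mathbb{R}\ltimes\mathbb{R}^{n-1}$ with $\mathrm{ad}(X_0)$ acting as the identity on the codimension-one abelian ideal $\ker\alpha$. Using that the edge of a Lie wedge is a subalgebra (obtained by differentiating $e^{\mathrm{ad}(x)}h(\omega)=h(\omega)$), I would analyse $\langle\exp(\omega)\rangle$ through the explicit group multiplication and the scaling action $e^{t}$ on $\ker\alpha$, so as to control the closure of the generated subgroup and identify it with $\exp(\omega)$; conversely, from a non-almost-abelian $\mathrm{g}$ I would build a Lie wedge on which $\exp$ generates a non-closed subgroup, via the same transversality phenomenon used above. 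I expect this last equivalence to be the main obstacle: unlike $(i)\Leftrightarrow(ii)$, it is genuinely global and analytic, demanding control of $\overline{\langle\exp(\omega)\rangle}$ rather than an infinitesimal identity, and it is precisely here that the full strength of \cite{KHH2} is required.
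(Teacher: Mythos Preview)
The paper does not prove this lemma at all: it is quoted verbatim from \cite{KHH2}, Theorem~4.3, and used as a black box to derive Corollary~\ref{C1001:001++}. So there is no ``paper's own proof'' to compare against.

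As to your proposal itself: the equivalence $(i)\Leftrightarrow(ii)$ is handled cleanly. Your computation of $e^{\mathrm{ad}(x)}$ in the almost abelian case is correct in both subcases, and the contrapositive for $(ii)\Rightarrow(i)$ via the two-dimensional-subalgebra characterisation is the standard and efficient route. One small point: in the $\alpha(x)\neq 0$ case you should perhaps remark that $\omega+h(\omega)\subseteq\omega$ follows because $h(\omega)\subseteq\omega$ and $\omega+\omega=\omega$, so that the reader sees why a positive multiple of $v$ plus an edge element stays in $\omega$.

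The genuine gap is $(i)\Leftrightarrow(iii)$, and you are candid about it. Your sketch for $(i)\Rightarrow(iii)$ is more of a strategy than a proof: the statement that $\exp(\omega)$ is already a closed subsemigroup (let alone that $\langle\exp(\omega)\rangle=\exp(\omega)$) is exactly the ``globality'' problem treated in \cite{KHH2}, and it does not follow from the normal form alone without a careful argument about how one-parameter semigroups in the $\mathbb{R}\ltimes\mathbb{R}^{n-1}$ model combine. For $(iii)\Rightarrow(i)$ your idea of reusing the transversality from $(ii)\Rightarrow(i)$ is plausible but needs more: you must produce a \emph{Lie} wedge (not merely a wedge) whose exponential image generates a non-closed group, and the half-plane $\mathbb{R}X_0+\mathbb{R}_{\ge 0}Y_0$ you used earlier is precisely the wedge that fails to be Lie in the non-almost-abelian case, so it cannot serve here. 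In short, your $(i)\Leftrightarrow(ii)$ is a complete proof, but $(i)\Leftrightarrow(iii)$ remains, as you yourself note, a deferral to \cite{KHH2} --- which is exactly what the paper does too.
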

\smallskip

As a consequence of above Lemma, we obtain:
\

\begin{cor}~\label{C1001:001++} We have $exp(F_{x})$ is a lie
subgroup of $G_{0}$.
\end{cor}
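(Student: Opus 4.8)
The plan is to deduce the statement from Lemma~\ref{L1001:001} together with the abelianness of $\mathrm{g}$ and Cartan's closed subgroup theorem. First I would record that since $G$ is abelian, its Lie algebra $\mathrm{g}$ is abelian, hence almost abelian in the sense of the preceding subsection (take the defining linear form $\alpha=0$). This places us exactly under the hypothesis of Lemma~\ref{L1001:001}.

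Next I would observe that $F_{x}$ is, by construction, a vector subspace of $\mathrm{g}$, namely a complement of $H_{x}$, and that any vector subspace is a wedge. By the equivalence (i)$\Leftrightarrow$(ii) of Lemma~\ref{L1001:001}, every wedge in the almost abelian algebra $\mathrm{g}$ is a Lie wedge; in particular $F_{x}$ is a Lie wedge. Applying part (iii) of the same lemma to $\omega=F_{x}$ then yields $\overline{\langle exp(F_{x})\rangle}=exp(F_{x})$, so the set $exp(F_{x})$ is closed in $G_{0}$.

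It remains to see that $exp(F_{x})$ is actually a subgroup, so that this closedness can be upgraded. Here I would use abelianness once more: since $\mathrm{g}$ is abelian, $exp(u)\,exp(v)=exp(u+v)$ for all $u,v$, and since $F_{x}$ is a linear subspace it is stable under sums and under negation. Hence $exp(F_{x})$ is closed under products and inverses, that is $\langle exp(F_{x})\rangle=exp(F_{x})$, and the conclusion of the previous paragraph reads simply $\overline{exp(F_{x})}=exp(F_{x})$.

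Finally, $exp(F_{x})$ is thus a closed subgroup of the Lie group $G_{0}$, so by the closed subgroup theorem it is an embedded Lie subgroup of $G_{0}$, which is the asserted claim. The only delicate point is the application of Lemma~\ref{L1001:001}(iii): this is where the full force of the almost abelian structure enters, since it is what guarantees that $exp(F_{x})$ is closed rather than merely a subgroup. Everything else is routine bookkeeping about subspaces being wedges and about the abelian exponential law.
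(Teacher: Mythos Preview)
Your proof is correct and follows essentially the same route as the paper: observe that $F_{x}$ is a wedge in the (almost) abelian algebra $\mathrm{g}$, apply Lemma~\ref{L1001:001} to obtain $\overline{\langle exp(F_{x})\rangle}=exp(F_{x})$, and conclude via the closed subgroup theorem. Your separate verification that $exp(F_{x})$ is a subgroup via the abelian exponential law is fine but not strictly needed, since the identity $\overline{\langle exp(F_{x})\rangle}=exp(F_{x})$ already exhibits $exp(F_{x})$ as the closure of a subgroup, hence a closed subgroup.
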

\smallskip

\begin{proof} By definition, we have $F_{x}$ is a wedge and by
Lemma~\ref{L1001:001}, (ii), it is a Lie wedge because
$\mathrm{g}$ is almost abelian (since it is abelian). Then by
Lemma~\ref{L1001:001}, (iii), we have $\overline{\langle
exp(F_{x})\rangle}=exp(F_{x})$, so $exp(F_{x})$ is closed subgroup
of $G_{0}$. It follows that $F_{x}$ is a Lie group.
\end{proof}
\smallskip

\begin{cor}\label{CC:0+03} $($Under  notations of Proposition
~\ref{p:1}$)$ The set $B(x)=\Phi_{x}(exp(F_{x})\cap V)$ is a
topological submanifold of $\mathbb{C}^{n}$ containing $x$.
Moreover, there exists an open subset $W$ of $\mathbb{C}^{n}$ such
that $W\cap G(x)=B(x)$.
\end{cor}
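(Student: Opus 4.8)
The plan is to treat the two claims separately, drawing on Proposition~\ref{p:1}(ii) for the local homeomorphism $\Phi^{(1)}_x$ and on Corollary~\ref{C1001:001++} for the group structure of $\exp(F_x)$. For the first claim I would argue as follows. By Corollary~\ref{C1001:001++}, $\exp(F_x)$ is a closed subgroup of $G_0$, hence an embedded Lie subgroup whose dimension is $q=\dim(F_x)$ (its Lie algebra being $F_x$). Since $\exp:\mathrm{g}\to G_0$ is a surjective homomorphism and hence an open map, and $V$ is the $\exp$-image of an open cube, $V$ is open in $G_0$; thus $\exp(F_x)\cap V$ is an open subset of $\exp(F_x)$ and is itself a topological manifold of dimension $q$. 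Proposition~\ref{p:1}(ii) says $\Phi^{(1)}_x$ maps $\exp(F_x)\cap V$ homeomorphically onto $B(x)$, so transporting the charts through this homeomorphism exhibits $B(x)$ as a topological submanifold of $\mathbb{C}^n$ of dimension $q$; and $x=\Phi_x(id)\in B(x)$ because $id=\exp(0)\in\exp(F_x)\cap V$. Note that $q=\dim(G_0(x))$ by Proposition~\ref{p:000+1}.

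For the \emph{moreover} part I would first reformulate the claim: producing an open $W$ with $W\cap G(x)=B(x)$ is equivalent to showing that $B(x)$ is relatively open in $G(x)$ for the topology induced from $\mathbb{C}^n$. I would try to obtain this through two intermediate facts: (a) $B(x)$ is open in $G_0(x)$, and (b) $G_0(x)$ is open in $G(x)$. For (a), note that $\exp(F_x)\cap V$ is open in $\exp(F_x)$ and that $\Phi_x$ is injective on the subgroup $\exp(F_x)$ (if $f,g\in\exp(F_x)$ satisfy $f(x)=g(x)$ then $g^{-1}f\in G_x\cap\exp(F_x)=\{id\}$); combining this with the local homeomorphism of Proposition~\ref{p:1}(ii) and translating by elements of $\exp(F_x)$, each acting as a homeomorphism of $\mathbb{C}^n$ as in the proof of Lemma~\ref{La:00}, should make $\Phi_x|_{\exp(F_x)}$ a local homeomorphism onto $G_0(x)$, so that the open set $\exp(F_x)\cap V$ has open image $B(x)$. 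For (b), I would use that $G_0$ is the identity component of the Lie group $G$, hence open in $G$, and that $G(x)$ is the union of the translates $g\,G_0(x)$, which by Proposition~\ref{p:1}(i) are precisely its connected components.

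The hard part will be reconciling the intrinsic (immersed) topology of the orbit coming from Proposition~\ref{p:000+1} with the subspace topology from $\mathbb{C}^n$, and this is exactly where step (b) is genuinely delicate: the orbit map $G\to G(x)$ need not be open onto the subspace topology, since a $q$-dimensional orbit may fail to be embedded and $\overline{G(x)}$ may be a manifold of dimension strictly larger than $q$, so that $G(x)$ can re-enter every ambient ball around a point of $B(x)$ from outside $B(x)$. To control this I would argue not globally but locally and uniformly: for each $y\in B(x)$ I would use the continuity of $(\Phi^{(1)}_x)^{-1}$ established in Proposition~\ref{p:1}(ii), namely the compactness and subsequence argument on $\overline{V}$, to produce an ambient neighbourhood $W_y$ with $W_y\cap G(x)\subset B(x)$; I would propagate these neighbourhoods along $B(x)$ by the translation device of Lemma~\ref{La:00} and finally set $W=\bigcup_{y\in B(x)}W_y$, for which $W\cap G(x)=B(x)$. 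I would watch this step carefully, since without additional input ruling out accumulation of $G(x)$ on $B(x)$ the equality can break down.
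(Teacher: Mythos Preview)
Your outline coincides with the paper's proof. For the first claim the paper argues exactly as you do: Corollary~\ref{C1001:001++} makes $\exp(F_x)$ a Lie subgroup of $G_0$, so $\exp(F_x)\cap V$ is an open piece of a $q$-manifold with $q=\dim F_x$, and Proposition~\ref{p:1}(ii) transports this to $B(x)$. For the \emph{moreover} part the paper uses precisely your decomposition (a)+(b): from Proposition~\ref{p:1}(i) it takes $G_0(x)$ to be a connected component of $G(x)$ and infers an ambient open $O$ with $O\cap G(x)=G_0(x)$ (your (b)); then, using Proposition~\ref{p:000+1} to get $\dim G_0(x)=\dim F_x=\dim B(x)$, it declares $B(x)$ open in $G_0(x)$ and extracts an ambient $W\subset O$ with $W\cap G_0(x)=B(x)$ (your (a)), concluding $W\cap G(x)=B(x)$.

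The one substantive difference is that you explicitly isolate and worry about the immersed-versus-embedded issue, whereas the paper passes over it: it simply asserts that openness of $B(x)$ in $G_0(x)$ yields an ambient open $W$ with $G_0(x)\cap W=B(x)$, and that a connected component of $G(x)$ is cut out by an ambient open $O$. Both assertions are exactly the ones you flag as delicate when the orbit topology is only the immersed one. Your proposed local fix via the continuity of $(\Phi^{(1)}_x)^{-1}$ and the translation device of Lemma~\ref{La:00} is more than the paper supplies; in effect you are being more careful than the source on a point it leaves implicit.
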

\smallskip

\begin{proof} By Corollary ~\ref{C1001:001++}, $exp(F_{x})$ is a lie subgroup of
$G_{0}$, so it is a topological manifold. By Proposition
~\ref{p:1}, $B(x)$ is homoeomorphic to $exp(F_{x})\cap V$ wich is
an open subset of $exp(F_{x})$. Then $B(x)$ is a topological
manifold with dimension equal to dim$(exp(F_{x}))$. On the other
hand, by (i), $G_{0}(x)=\Phi_{x}(exp(F_{x}))$ is a connected
component of $G(x)$ containing $x$, then there exists an open
subset $O$ of $\mathbb{C}^{n}$ such that $O\cap G(x)=G_{0}(x)$.
Since the exponential map $exp$ is a locally diffeomorphism on a
neighborhood of $0$ then dim$(exp(F_{x}))=\mathrm{dim}(F_{x})$, so
dim$(B(x))=\mathrm{dim}(exp(F_{x}))=\mathrm{dim}(F_{x})$. By
Proposition ~\ref{p:000+1}, $G_{0}(x)$ is  an immersed submanifold
of $\mathbb{C}^{n}$ with dimension
dim$(F_{x})=\mathrm{dim}(\mathrm{g})-\mathrm{dim}(H_{x})$ because
$\mathrm{g}$ is also the lie algebra of $G_{0}$. Therefore
dim$(B(x))=\mathrm{dim}(G_{0}(x))$, so $B(x)$ is an open subset of
$G_{0}(x)$. Then there exists an open subset $W$ of
$\mathbb{C}^{n}$ containing $x$ and contained in $O$ such that
$G_{0}(x)\cap W=B(x)$. It follows that $W\cap G(x)=G_{0}(x)\cap
W=B(x)$. The proof is completed.
\end{proof}

\smallskip

\begin{lem}\label{L:alal}For every neighborhood  $W$ of a point $x\in \mathbb{C}^{n}$, we have $\overline{G(x)}\cap W=\overline{G(x)\cap W}\cap W$.
\end{lem}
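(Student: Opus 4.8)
The statement to prove is a purely topological identity: for every neighborhood $W$ of a point $x\in\mathbb{C}^{n}$, we have $\overline{G(x)}\cap W=\overline{G(x)\cap W}\cap W$. Nothing in it actually uses the group structure of $G$; it holds for any subset $S=G(x)$ of $\mathbb{C}^{n}$ and any open $W$. The plan is to prove the two inclusions separately, and the key hypothesis that makes the nontrivial direction work is that $W$ is open.

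The easy inclusion is $\overline{G(x)\cap W}\cap W\subseteq\overline{G(x)}\cap W$. Here I would simply use monotonicity of closure: since $G(x)\cap W\subseteq G(x)$, we get $\overline{G(x)\cap W}\subseteq\overline{G(x)}$, and intersecting both sides with $W$ yields the desired containment. This direction is valid for an arbitrary neighborhood $W$ and does not require openness.

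For the reverse inclusion $\overline{G(x)}\cap W\subseteq\overline{G(x)\cap W}\cap W$, I would take a point $y\in\overline{G(x)}\cap W$ and show $y\in\overline{G(x)\cap W}$. Since $y\in W$ and $W$ is open, every open neighborhood $O$ of $y$ may be replaced by $O\cap W$, which is again an open neighborhood of $y$ contained in $W$. Because $y\in\overline{G(x)}$, the set $O\cap W$ must meet $G(x)$; but any point of $G(x)$ lying in $O\cap W$ lies in $G(x)\cap W$ and in $O$, so $O$ meets $G(x)\cap W$. As $O$ was an arbitrary neighborhood of $y$, this shows $y\in\overline{G(x)\cap W}$. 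Combined with $y\in W$, we conclude $y\in\overline{G(x)\cap W}\cap W$.

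The one point that genuinely carries the argument, and hence the only possible obstacle, is the use of openness of $W$ in the reverse inclusion: it is what allows me to shrink an arbitrary neighborhood $O$ of $y$ into $W$ while keeping it open, thereby witnessing intersection with $G(x)\cap W$ rather than merely with $G(x)$. If $W$ were only assumed to be a neighborhood in the sense of containing $y$ in its interior, the same argument applies after passing to the interior $\overset{\circ}{W}$; since the statement is invoked in the paper with $W$ taken to be an open set, no additional care is needed. Putting the two inclusions together gives the claimed equality, completing the proof.
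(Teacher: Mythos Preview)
Your proof is correct and follows essentially the same approach as the paper: one trivial inclusion by monotonicity of closure, and the reverse inclusion by exploiting that $W$ is open so that points of $G(x)$ approximating $y$ eventually lie in $W$. The only cosmetic difference is that the paper phrases the second inclusion via sequences (which is legitimate in $\mathbb{C}^{n}$) while you use the open-neighborhood characterization of closure.
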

\smallskip

\begin{proof} It is clear that $\overline{G(x)\cap W}\cap W\subset \overline{G(x)}\cap
W$. Now, let $y\in \overline{G(x)}\cap W$ then there exists a
sequence $(y_{m})_{m}$ in $G(x)$ tending to $y$. So $y_{m}\in W$
from some row $m_{0}$. Thus $y\in \overline{G(x)\cap W}\cap W$.
\end{proof}
\smallskip

\begin{proof}[Proof of Theorem~\ref{T:2}] Let $G$ be an abelian subgroup of $Diff^{1}(\mathbb{C}^{n})$. By Corollary ~\ref{CC:0+03}, there exists
an open subset $W$ of $\mathbb{C}^{n}$ such that $W\cap G(x)=B(x)$
is a submanifold of $\mathbb{C}^{n}$. So $B(x)$ is locally closed,
we can assume that $\overline{B(x)}\cap W=B(x)$. Therefore, by
Lemma ~\ref{L:alal} we have  $\overline{G(x)}\cap W=
\overline{G(x)\cap W}\cap W$, so $B(x)\subset\overline{G(x)}\cap
W= \overline{G(x)\cap W}\cap W =\overline{B(x)}\cap W=B(x)$. Hence
$\overline{G(x)}\cap W=B(x)$ is a topological manifold. By
Lemma~\ref{La:00}, it follows that $G(x)$ is regular. We conclude
that the action of $G$ is regular.
\end{proof}
\medskip

Let $M_{n}(\mathbb{C})$  be the set of all square matrix over
$\mathbb{C}$ with order $n$ and $GL(n, \mathbb{C})$ be the group
of all reversible matrix of $M_{n}(\mathbb{C})$. Let $L$ be an
abelian subgroup of $GL(n, \mathbb{C})$, denote by:\
\\ - $\widetilde{L}=\overline{L}\cap GL(n, \mathbb{C})$, where
$\overline{L}$ is the closure of $L$ in $M_{n}(\mathbb{C})$. It is
clear that $\widetilde{L}$ is a lie subgroup of $GL(n,
\mathbb{C})$. \ \\ - $\overline{L}(x)=\{Ax,\ \ A\in
\overline{L}\}$.\ \\ We will use the following lemma to prove
Corollary ~\ref{C:2}.
\smallskip

\begin{lem}\label{L:04} For every $x\in \mathbb{C}^{n}$. We have
$\overline{\widetilde{L}(x)}=\overline{L(x)}$.
\end{lem}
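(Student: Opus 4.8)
The plan is to sandwich the two orbits between a chain of inclusions and then collapse the chain with a single continuity argument. First I would record that $L\subset\widetilde{L}\subset\overline{L}$: the first inclusion holds because $L$ is a subgroup of $GL(n,\mathbb{C})$ and $L\subset\overline{L}$, while the second is immediate from the definition $\widetilde{L}=\overline{L}\cap GL(n,\mathbb{C})$. Evaluation at $x$ preserves inclusions, so $L(x)\subset\widetilde{L}(x)\subset\overline{L}(x)$, and passing to closures gives $\overline{L(x)}\subset\overline{\widetilde{L}(x)}\subset\overline{\overline{L}(x)}$. In particular the inclusion $\overline{L(x)}\subset\overline{\widetilde{L}(x)}$ is free, so only the reverse direction needs work.

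The only nontrivial step is to show that the outer terms of the chain coincide, i.e. $\overline{\overline{L}(x)}\subset\overline{L(x)}$; once this is established the whole chain collapses and in particular the desired equality $\overline{\widetilde{L}(x)}=\overline{L(x)}$ drops out. For this I would invoke the continuity of the evaluation map $A\longmapsto Ax$ from $M_{n}(\mathbb{C})$ to $\mathbb{C}^{n}$, which is clear since it is a linear map on a finite-dimensional space. Given $y\in\overline{L}(x)$, write $y=Ax$ with $A\in\overline{L}$ and choose a sequence $(A_{k})_{k}$ in $L$ converging to $A$ in $M_{n}(\mathbb{C})$ (such a sequence exists precisely because $\overline{L}$ is the closure of $L$ in $M_{n}(\mathbb{C})$). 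By continuity of evaluation, $A_{k}x\to Ax=y$, and each $A_{k}x$ lies in $L(x)$; hence $y\in\overline{L(x)}$. This proves $\overline{L}(x)\subset\overline{L(x)}$, and taking closures yields $\overline{\overline{L}(x)}\subset\overline{L(x)}$, as required.

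I do not expect a serious obstacle here: the argument merely transports a limit of matrices through the linear, hence continuous, evaluation map, and the abelianness of $L$ plays no role. The one point worth stating carefully is that the approximating matrices $A_{k}$ are elements of $L$ (so they are invertible), whereas the limit $A\in\overline{L}$ need not be invertible; this causes no difficulty, because the inclusion $\overline{L}(x)\subset\overline{L(x)}$ is being proved for \emph{all} $A\in\overline{L}$, and invertibility of $A$ is never used. The invertible case $A\in\widetilde{L}$ is then just a special instance, which is why the intermediate orbit $\widetilde{L}(x)$ gets squeezed to the same closure.
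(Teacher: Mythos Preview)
Your argument is correct. Both your proof and the paper's rest on the same observation---continuity of the linear evaluation map $A\mapsto Ax$ on $M_{n}(\mathbb{C})$---but the packaging differs. The paper proves $\overline{\widetilde{L}(x)}\subset\overline{L(x)}$ directly: it takes $y=\lim_{m}A_{m}x$ with $A_{m}\in\widetilde{L}$, approximates each $A_{m}$ by a sequence $(A_{m,k})_{k}$ in $L$, and then runs an $\varepsilon/2$ diagonal argument to extract a single sequence in $L(x)$ converging to $y$. Your sandwich through the larger set $\overline{L}$ is cleaner: by proving $\overline{L}(x)\subset\overline{L(x)}$ with a single limit, you avoid the double-sequence bookkeeping entirely and then squeeze $\widetilde{L}(x)$ between $L(x)$ and $\overline{L}(x)$. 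The gain is purely in economy; neither argument uses anything beyond the continuity of a linear map, and abelianness is indeed irrelevant in both.
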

\smallskip

\begin{proof}We have $\overline{L(x)}\subset\overline{\widetilde{L}(x)}$. Let
$y\in \overline{\widetilde{L}(x)}$, so
$y=\underset{m\to+\infty}{lim}A_{m}(x)$ for some sequence
$(A_{m})_{m\in \mathbb{N}}$ in $\widetilde{G}$. Therefore, for
every $m\in\mathbb{N}$, there exists a sequence
 $(A_{m,k})_{k\in\mathbb{N}}$ in $G_{/E(x)}$ tending to
 $A_{m}$. Then  $\underset{k\to
 +\infty}{lim}A_{m,k}x=A_{m}x$. Thus for every $\varepsilon>0$, \ there exists $M>0$ and  for
every $m\geq M$, there exists $k_{m}>0$,
 such that for every $k\geq k_{m}$, we have $\|A_{m}x-y\|<\frac{\varepsilon}{2}$ and  $\|A_{m,k}x-A_{m}x\|<\frac{\varepsilon}{2}$.\ Then,
for every $m>M$, $$\|A_{m,k_{m}}x-y\|\leq \|A_{m,k_{m}}x-A_{m}x\|+
\|A_{m}x-y\|<\varepsilon.$$ Hence $\underset{m\to
+\infty}{lim}A_{m,k_{m}}x=y$, so $y\in \overline{G(x)}$. It
follows that
 $\overline{\widetilde{G}(u)}\subset \overline{G(u)}$. The proof is completed.
\end{proof}\
\smallskip

\begin{proof}[Proof of Corollary~\ref{C:2}] By Theorem~\ref{T:2}, the action of $\widetilde{L}$ is regular.
Then for each $x\in \mathbb{C}^{n}$, there exists an open subset
$O$ of $\mathbb{C}^{n}$ such that $\overline{\widetilde{L}(x)}\cap
O$ is a topological sub-manifold of $\mathbb{C}^{n}$. It follows
by Lemma ~\ref{L:04}, that $\overline{L(x)}\cap O$ is a
topological sub-manifold of $\mathbb{C}^{n}$. The proof is
completed.
\end{proof}
\smallskip

\bibliographystyle{amsplain}
\vskip 0,4 cm

 \end{multicols}

\end{document}